\newtheorem{theorem}{Theorem}[section]
\newtheorem{teor}{Theorem}[section]
\newtheorem{corol}[theorem]{Corollary}
\newtheorem{lemma}[teor]{Lemma}
\theoremstyle{definition}
\newtheorem{rk}[teor]{Remark}
\newcommand{\R}{\mbox{${\mathbb R}$}}
\newcommand{\C}{\mbox{${\mathbb C}$}}
\newcommand{\Z}{\mbox{${\mathbb Z}$}}
\newcommand{\w}{\mbox{${\omega}$}}
\newcommand{\GW}{\mbox{$\operatorname{GW}$}}
\DeclareFontFamily{U}{mathx}{\hyphenchar\font45}
\DeclareFontShape{U}{mathx}{m}{n}{
      <5> <6> <7> <8> <9> <10>
      <10.95> <12> <14.4> <17.28> <20.74> <24.88>
      mathx10
      }{}
\DeclareSymbolFont{mathx}{U}{mathx}{m}{n}
\DeclareMathAccent{\widecheck}{0}{mathx}{"71}
\DeclareMathAccent{\wideparen}{0}{mathx}{"75}
\begin{document}

\title{\bf Hofer-Zehnder capacity and Bruhat graph}
\author{Alexander Caviedes Castro}

\email{alexanderc1@post.tau.ac.il}

\date{\today}
\maketitle

\begin{abstract}
We find bounds for the Hofer-Zehnder capacity of coadjoint orbits of
compact Lie groups with respect to the Kostant--Kirillov--Souriau
symplectic form  in terms of the combinatorics of their Bruhat
graph. We show that our bounds are sharp for coadjoint orbits of the
unitary group and equal to the diameter of a weighted Cayley graph.
\end{abstract}

\section{Introduction}

The Gromov non-squeezing theorem in symplectic geometry states that
is not possible to embed symplectically a ball into a cylinder of
smaller radius, although this can be done with volume preserving
embeddings \cite{gromov}. Hence, the biggest radius of a ball that
can be symplectically embedded into a symplectic manifold can be
used as a way to measure the ``symplectic size'' of the manifold. We
call this radius \textit{Gromov's width}.

The Gromov width as a symplectic invariant is extended through the
notion of \textit{symplectic capacity} whose axiomatic formulation
is due to Ekeland and Hofer \cite{EkelandHoferI},
\cite{EkelandHoferII}. An important example of capacity is the
\textit{Hofer-Zehnder capacity} \cite{HoferZehnder}. The
Hofer-Zehnder capacity of a closed symplectic manifold $(M, \w)$ is
defined as
$$
\operatorname{c_{HZ}}(M, \w):=\sup{\Bigl
\{\operatorname{max}{H}-\operatorname{min}{H}\,\,|\,\, H:M\to\R
\text{ slow} \Bigr\}},
$$
where a Hamiltonian $H:M\to \R$ is slow if the periodic trajectories
of its Hamiltonian flow are either constant or have period greater
or equal to one. In comparison with the Gromov width, the
Hofer-Zehnder capacity measures the size of a symplectic manifold in
a Hamiltonian dynamic way

In this paper, we are interested in computing bounds for the
Hofer-Zehnder capacity of coadjoint orbits of compact Lie groups
with respect to their Kostant-Kirillov-Souriau form. We summarize
the main results in this paper in the following theorem.

\begin{teor}

Let $G$ be a compact connected simple Lie group with Lie algebra
$\mathfrak{g}.$ We identify the Lie algebra $\mathfrak{g}$ with its
dual $\mathfrak{g}^*$ via an adjoint invariant inner product. Let
$T\subset G$ be a maximal torus. For $\lambda \in \mathfrak{t}
\subset \mathfrak{g},$ let $\mathcal{O}_\lambda$ be the coadjoint
orbit passing through $\lambda$ and $\w_\lambda$ be the
Kostant--Kirillov--Souriau form defined on $\mathcal{O}_\lambda.$
Let $R$ be the corresponding system of roots and $S$ be a choice of
simple roots.

For a positive root $\beta$ we write
$$
\beta=\sum_{\alpha\in S}n_{\beta\alpha}\alpha
$$
for some nonnegative integer $n_{\beta\alpha}.$ We denote by
$\check{\beta}$ the coroot associated with a root $\beta.$

Let $W=N_G(T)/T$ be the Weyl group relative to $T$ and $w_0$ be the
longest element in $W$ relative to the set of simple roots $S.$ If
there exist positive roots $\alpha_1, \cdots, \alpha_r$ such that
$$
w_0=s_{\alpha_1}\cdot \ldots \cdot s_{\alpha_r},
$$
then we obtain the following bounds for the Hofer-Zehnder capacity
of $(\mathcal{O}_\lambda, \w_\lambda)$
\begin{equation}\label{inequality}
\max_{\alpha \in S}\Bigl\{\sum_{k=1}^r
\dfrac{n_{\alpha_k\,\alpha}}{n_{\rho\,\alpha}}|\langle \lambda,
\check{\alpha}_k \rangle|  \Bigr\} \leq
c_{\operatorname{HZ}}(\mathcal{O}_\lambda, \w_\lambda) \leq
\sum_{k=1}^r |\langle \lambda, \check{\alpha}_k \rangle|,
\end{equation}
here $\rho$ denotes the highest positive root.
\end{teor}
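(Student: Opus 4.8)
The plan is to prove the two inequalities by entirely different mechanisms: the upper bound through a Gromov--Witten obstruction to slow Hamiltonians, and the lower bound through the explicit construction of slow Hamiltonians from the Hamiltonian torus action on $\mathcal{O}_\lambda$.

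For the upper bound I would invoke the comparison between the Hofer--Zehnder capacity and genus-zero Gromov--Witten invariants: if a closed symplectic manifold carries a nonzero invariant counting rational curves through two generic points in a homology class $A$, then $c_{\operatorname{HZ}}\leq\w(A)$. The factorization $w_0=s_{\alpha_1}\cdots s_{\alpha_r}$ determines a path $e\to s_{\alpha_1}\to\cdots\to w_0$ in the Bruhat graph, hence a connected chain of $T$-invariant spheres in $\mathcal{O}_\lambda$ joining the fixed points $\lambda$ and $w_0\lambda$; the $k$-th sphere is the orbit of the $SU(2)$ attached to $\alpha_k$ and has symplectic area $|\langle\lambda,\check{\alpha}_k\rangle|$, so the total class $A$ satisfies $\w_\lambda(A)=\sum_{k=1}^r|\langle\lambda,\check{\alpha}_k\rangle|$. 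I would then produce the curve through two generic points from the Bott--Samelson resolution associated with the word, which maps a tower of $\cpi$-bundles onto $\mathcal{O}_\lambda$ in the class $A$, and deduce nonvanishing of the relevant invariant from the structure of the quantum product of the point class. This yields $c_{\operatorname{HZ}}(\mathcal{O}_\lambda,\w_\lambda)\leq\sum_{k=1}^r|\langle\lambda,\check{\alpha}_k\rangle|$.

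For the lower bound, fix a simple root $\alpha$ and consider the Hamiltonian $H_\alpha=\langle\mu,\varpi_\alpha^\vee\rangle$, where $\mu$ is the $T$-moment map (the composition of $\mathcal{O}_\lambda\hookrightarrow\mathfrak{g}^*\cong\mathfrak{g}$ with the orthogonal projection onto $\mathfrak{t}$) and $\varpi_\alpha^\vee$ is the fundamental coweight dual to the simple roots. This function generates a circle action whose isotropy weights at the fixed points $w\lambda$ are the integers $\langle\beta,\varpi_\alpha^\vee\rangle=n_{\beta\,\alpha}$ for roots $\beta$; the largest of these equals $n_{\rho\,\alpha}$, attained in the direction of the highest root $\rho$. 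Consequently a reparametrization $f(H_\alpha)$ is slow as soon as $f'\le 1/n_{\rho\,\alpha}$ where the maximal weight is active, the binding constraint coming precisely from $n_{\rho\,\alpha}$. Concatenating such slow reparametrizations monotonically along the Bruhat chain, the edge labelled $\alpha_k$ contributes its area $|\langle\lambda,\check{\alpha}_k\rangle|$ weighted by the ratio $n_{\alpha_k\,\alpha}/n_{\rho\,\alpha}$ of its $\varpi_\alpha^\vee$-weight to the maximal weight, producing a slow Hamiltonian of oscillation $\sum_{k=1}^r (n_{\alpha_k\,\alpha}/n_{\rho\,\alpha})|\langle\lambda,\check{\alpha}_k\rangle|$. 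Taking the supremum over $\alpha\in S$ gives the left-hand inequality.

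The main obstacle is twofold. On the upper side, the delicate point is the nonvanishing of the two-point invariant in the class $A$: a single chain of $T$-invariant spheres need not sweep out a generic pair of points, so one must genuinely use the Bott--Samelson map and the quantum cohomology of $\mathcal{O}_\lambda$ rather than the naive incidence of the torus-fixed curves. On the lower side, the crux is to verify that the concatenated reparametrization is globally slow (that no short periodic orbits are created where consecutive spheres of the chain meet) and that its oscillation is exactly the claimed weighted sum; this is where the normalization by $n_{\rho\,\alpha}$ and the telescoping of the weight changes $\langle (w_{k-1}-w_k)\lambda,\varpi_\alpha^\vee\rangle$ along the chain must be reconciled with the coefficients $n_{\alpha_k\,\alpha}$.
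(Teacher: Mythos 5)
Your architecture is the same as the paper's (G.~Lu's two-point Gromov--Witten bound for the upper inequality, slow Hamiltonians built from a scaled circle moment map for the lower one), but in both halves the step you defer is exactly the step that carries the proof, and in both cases what you propose to fill it with does not work as stated. For the upper bound, the paper does not attempt to exhibit curves through two generic points directly: it quotes the Fulton--Woodward theorem, which says that a chain of degree $d$ between $u$ and $v$ in the Bruhat graph forces $\GW_{c,3}(\sigma_u,\sigma_v,\sigma_w)\ne 0$ for \emph{some} degree $c\leq d$ and some $w$; applying this with $u=v=w_0$ (the point class) to the path determined by $w_0=s_{\alpha_1}\cdots s_{\alpha_r}$ and then using $\w_\lambda(c)\leq\w_\lambda(d)$ gives the bound via Lu's theorem. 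Your substitute has two concrete defects: Bott--Samelson towers of $\cpi$-bundles are attached to words in \emph{simple} reflections, whereas the $\alpha_k$ here are arbitrary positive roots, so the resolution you invoke does not exist in the form you need; and you aim at nonvanishing in the exact class $A=\sum_k\check{\alpha}_k$, which is more than is true in general --- Fulton--Woodward only guarantees nonvanishing in some $c\leq A$, and that weaker statement (which suffices, by monotonicity of $\w_\lambda$ on effective classes) is precisely the theorem you would end up reproving.

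For the lower bound, your mechanism (rescale $H_\alpha=\langle\mu,\varpi_\alpha^\vee\rangle$ by $1/n_{\rho\,\alpha}$, the maximal isotropy weight) is the paper's, and the slowness of the rescaled map is proved there by an equivariant Darboux/stabilizer-order argument. The genuine gap is the oscillation identity. The oscillation of any rescaling of $H_\alpha$ is governed by $\langle\lambda-w_0\lambda,\varpi_\alpha^\vee\rangle$, and telescoping along the path gives
$$
\langle\lambda-w_0\lambda,\varpi_\alpha^\vee\rangle=\sum_{k=1}^r\langle\lambda,\check{\alpha}_k\rangle\,\langle w_{k-1}(\alpha_k),\varpi_\alpha^\vee\rangle,
\qquad w_{k-1}=s_{\alpha_1}\cdots s_{\alpha_{k-1}},
$$
and $w_{k-1}(\alpha_k)$ is in general neither $\alpha_k$ nor even a positive root, so its $\alpha$-coefficient is not $n_{\alpha_k\,\alpha}$. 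This is not a technical verification you can postpone; for an arbitrary factorization the claimed formula for the oscillation is simply false. The missing idea is the hypothesis the paper adds in its Theorem 7.3: choose the $\alpha_k$ \emph{pairwise orthogonal}, so that $w_{k-1}(\alpha_k)=\alpha_k$ and $w_0\lambda=\lambda-\sum_k\langle\lambda,\check{\alpha}_k\rangle\alpha_k$ exactly; the paper then verifies type by type (Section 8) that $w_0$ always admits such an orthogonal factorization. Your concatenation of reparametrizations cannot repair this, since no reparametrization of $H_\alpha$ changes the endpoints $\langle\lambda,\varpi_\alpha^\vee\rangle$ and $\langle w_0\lambda,\varpi_\alpha^\vee\rangle$; also note that no concatenation is needed once orthogonality is assumed --- a single linear rescaling suffices (the paper's linear-programming step only shows that the fundamental coweights are the optimal choice among all positive coweights, not that they are needed for the bound).
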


In the proof of the nonsqueezing theorem, Gromov noted that the
Gromov width of a symplectic manifold is constrained by the
existence of pseudoholomorphic curves \cite{gromov}. The relation
between pseudoholomorphic curves and the Hofer-Zehnder capacity was
also observed by several authors in the context of the Weinstein
conjecture (see e.g  Floer, Hofer and Viterbo
\cite{FloerHoferViterbo}, Hofer and Viterbo \cite{HoferViterbo}, Liu
and Tian \cite{LiuTian}). This relation appears more explicit in a
result of G. Lu that bounds the Hofer-Zenhder capacity of a
symplectic manifold when it has a nonzero Gromov-Witten invariant
with two point constrains \cite{Lu}, \cite{glu2}. In this paper, we
use G. Lu's result to bound from above the Hofer-Zehnder capacity of
coadjoint orbits of compact Lie groups.

The \textit{Bruhat graph} (also known as \textit{moment graph} or
\textit{GKM graph}) of a coadjoint orbit is the graph whose vertices
and edges are in one to one correspondence with the points and
irreducible invariant curves that are invariant with respect to the
action of a maximal torus on the coadjoint orbit. A result of Fulton
and Woodward states that the minimal degrees appearing in the
nonvanishing Gromov-Witten invariants of a coadjoint orbit can be
interpreted in terms of paths of its Bruhat graph \cite{fultonw}.
The main goal of the present paper is to point out the relation
between the Bruhat graph and the Hofer-Zehnder capacity of coadjoint
orbits of compact Lie groups.

If we weight the edges of the Bruhat graph of the coadjoint orbit
with the symplectic area of the curves that they represent, then the
right hand side of the inequality \eqref{inequality} appearing in
the Main theorem can be reinterpreted as the following inequality
\[
c_{\operatorname{HZ}}(\mathcal{O}_\lambda, \w_\lambda)\leq
\operatorname{diameter\,\, weighted \,\,Bruhat\,\, graph\,\, of \,\,
}(\mathcal{O}_\lambda, \w_\lambda)
\]
In this paper we show that the previous inequality is sharp for
coadjoint orbits of the unitary group.

\begin{teor}
Let $\lambda=(\lambda_1, \cdots, \lambda_n)\in \R^n$ and assume that
$\lambda_1\geq\cdots \geq \lambda_n.$ Let
$$
\mathcal{H}_\lambda:=\{A\in M_n(\C): A^*=-A, \operatorname{spectrum
}{A}=i\lambda\}.
$$
We identify $\mathcal{H}_\lambda$ with a coadjoint orbit of $U(n)$
and endow it with a symplectic form $\w_\lambda$ coming from the
Kostant-Kirillov-Souriau form.

Let us consider the weighted Cayley graph of the symmetric group
$S_n$ where two permutations are joined by an edge of weight
$|\lambda_i-\lambda_j|$ if they differ by a trasposition $(i, j).$
Then
\begin{align*}
c_{\operatorname{HZ}}(\mathcal{H}_\lambda, \w_\lambda)&=
\dfrac{1}{2}\sum_{k=1}^n|\lambda_k-\lambda_{n-k+1}|\\&=\operatorname{diameter
\,\,of\,\, the\,\, weighted\,\, Cayley\,\, graph\,\, of \,\,} S_n.
\end{align*}
\end{teor}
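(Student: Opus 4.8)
The statement packages two equalities: the value of the capacity and the value of the graph diameter. The plan is to show that both equal the explicit number
\[
N:=\sum_{k=1}^{\lfloor n/2\rfloor}|\lambda_k-\lambda_{n+1-k}|=\tfrac12\sum_{k=1}^{n}|\lambda_k-\lambda_{n-k+1}|.
\]
The capacity is the easier half. Since the Kostant--Kirillov--Souriau form and all the pairings $\langle\lambda,\check\beta\rangle$ are insensitive to translating $\lambda$ by a multiple of the identity matrix, $\mathcal{H}_\lambda$ is a coadjoint orbit of the simple group $SU(n)$, so I would apply the first Theorem in the root system of type $A_{n-1}$. Here the positive roots are $e_i-e_j$ ($i<j$), the simple roots are $\alpha_i=e_i-e_{i+1}$, the highest root is $\rho=e_1-e_n=\alpha_1+\cdots+\alpha_{n-1}$ so that $n_{\rho\,\alpha_i}=1$ for every $i$, the reflection $s_{e_i-e_j}$ is the transposition $(i,j)$ of $W=S_n$, and $\langle\lambda,\check\beta\rangle=\lambda_i-\lambda_j$ for $\beta=e_i-e_j$.

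For the capacity bounds I would feed the first Theorem the factorization of the longest element $w_0$ (the order-reversing permutation) into the $\lfloor n/2\rfloor$ commuting reflections in the roots $\beta_k:=e_k-e_{n+1-k}$,
\[
w_0=\prod_{k=1}^{\lfloor n/2\rfloor}s_{\beta_k},
\]
which play the role of $s_{\alpha_1},\ldots,s_{\alpha_r}$ in the Main Theorem, with $|\langle\lambda,\check\beta_k\rangle|=|\lambda_k-\lambda_{n+1-k}|$. The upper bound is then immediately $N$. For the lower bound I would expand $\beta_k=\alpha_k+\alpha_{k+1}+\cdots+\alpha_{n-k}$ in simple roots, so that the coefficient of $\alpha_i$ in $\beta_k$ equals $1$ exactly when $k\le i\le n-k$; since $n_{\rho\,\alpha_i}=1$, the maximum over simple roots $\alpha_i$ equals $\max_i\sum_{k\le\min(i,\,n-i)}|\lambda_k-\lambda_{n+1-k}|$, which is attained at $i=\lfloor n/2\rfloor$ and again equals $N$. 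The two bounds coincide, pinning $c_{\operatorname{HZ}}(\mathcal{H}_\lambda,\w_\lambda)=N$.

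For the Cayley graph I would first prove the distance formula $d(e,\pi)=\tfrac12\sum_{i=1}^n|\lambda_i-\lambda_{\pi(i)}|$, where the edge labelled $(i,j)$ has weight $|\lambda_i-\lambda_j|$. The lower bound uses the potential $\Phi(\sigma)=\tfrac12\sum_i|\lambda_i-\lambda_{\sigma(i)}|$: a two-line use of the triangle inequality gives $|\Phi(\sigma(i,j))-\Phi(\sigma)|\le|\lambda_i-\lambda_j|$, so $\Phi$ drops by at most the weight along each edge and every word representing $\pi$ has weight at least $\Phi(\pi)$. For the matching upper bound I would show that, whenever $\pi\ne e$, one can always right-multiply by a transposition $(i,j)$ lowering $\Phi$ by exactly $|\lambda_i-\lambda_j|$; iterating such ``exact'' moves produces a word of total weight $\Phi(\pi)$. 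Finally, writing $d_l=\lambda_l-\lambda_{l+1}\ge0$, a rearrangement of the sum yields $\Phi(\pi)=\sum_{l=1}^{n-1}d_l\,m_l(\pi)$ with $m_l(\pi)=|\{i\le l:\pi(i)>l\}|$; since the graph is vertex-transitive its diameter is $\max_\pi d(e,\pi)=\max_\pi\Phi(\pi)$, and from $m_l(\pi)\le\min(l,n-l)=m_l(w_0)$ the maximum is attained at the reversal $w_0$, giving diameter $=\Phi(w_0)=N$.

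The step I expect to be the real obstacle is the upper bound in the distance formula, i.e. the existence of an exact $\Phi$-decreasing transposition at every non-identity permutation. The naive candidates fail: adjacent transpositions are not cost-optimal and can even have the wrong parity, and greedily splitting off the largest or smallest misplaced value need not be exact. The condition for the swap $(i,j)$ to be exact is that, viewing each position $p$ as an arrow from height $\lambda_p$ to height $\lambda_{\pi(p)}$, the two arrows straddle each other's feet, namely $\lambda_j$ lies between $\lambda_i$ and $\lambda_{\pi(i)}$ and $\lambda_i$ lies between $\lambda_j$ and $\lambda_{\pi(j)}$. I would establish that such a crossing pair always exists by an extremal argument on these arrows (for instance following the cycle through the topmost displaced height), which is the technical heart of the combinatorial half.
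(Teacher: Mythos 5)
Your overall strategy is correct, but it is genuinely different from the paper's in both halves, and one step is left as a sketch that, as literally hinted, does not quite work.

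On the capacity: the paper's proof of Theorem \ref{HZUn} uses the same decomposition $w_0=\prod_{k}s_{e_k-e_{n+1-k}}$ for the upper bound (a path in the Bruhat graph fed into Theorem \ref{upperbound} and Theorem \ref{Lu}), but for the lower bound it does \emph{not} invoke the general machinery: it constructs an explicit slow Hamiltonian, namely one half of the moment map of the circle generated by $\beta=\sum_k(e_k-e_{n-k+1})$, and checks that all nonconstant orbits have period $1$ or $2$ (the factor $\tfrac12$ compensating the $\Z_2$ stabilizers). You instead reduce $U(n)$ to $SU(n)$ (correct, since translating $\lambda$ by a multiple of the identity is a central shift preserving everything) and specialize the paper's Theorem \ref{Ca2} to type $A_{n-1}$; this is legitimate and non-circular, since Theorem \ref{Ca2} is proved independently of the $U(n)$ computation, and your hypotheses are met because the roots $e_k-e_{n+1-k}$ are pairwise orthogonal. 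Your evaluation of both bounds is right: $n_{\rho\,\alpha_i}=1$ for all $i$ in type $A$, the coefficient pattern of $\beta_k$ gives $\max_i\sum_{k\le\min(i,n-i)}|\lambda_k-\lambda_{n+1-k}|$, attained at $i=\lfloor n/2\rfloor$, so the lower bound coincides with the upper bound $N$. This is a nice observation (it is implicit in the table of Section \ref{computation}), and it trades the paper's hands-on extremal Hamiltonian for an appeal to the general theorem.

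On the Cayley graph: here the paper gives no proof at all; the remark after Theorem \ref{HZUn} cites Farnoud--Milenkovic for the distance formula $d(e,\sigma)=\tfrac12\sum_i|\lambda_i-\lambda_{\sigma(i)}|$ and Rinott/Vince for the rearrangement inequality, whereas you attempt to prove both. Your potential-function lower bound and your rearrangement argument (writing $\Phi(\pi)=\sum_l d_l\,m_l(\pi)$ with $m_l(\pi)\le\min(l,n-l)=m_l(w_0)$, plus vertex-transitivity) are complete and correct. The one genuine hole is exactly where you flagged it: the existence at every $\pi\ne e$ of a transposition decreasing $\Phi$ by exactly its weight. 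Your mutual-straddling criterion for exactness is right, but the extremal choice you hint at --- the two arrows incident to the topmost displaced height --- can fail: for a $3$-cycle with heights $0\to 3\to 2\to 0$, the in-arrow $0\to 3$ and out-arrow $3\to 2$ at the top do not straddle each other (one needs $2\le 0$); the exact pair is $(0\to 3,\ 2\to 0)$, deeper in the cycle. The lemma is nevertheless true and can be rescued by a shortcut induction on a single cycle viewed as a closed walk on $\R$: at the topmost vertex, either the exit height is $\le$ the entry height and the two incident steps are exact, or one contracts $v_-\to M\to v_+$ to the single step $v_-\to v_+$, finds an exact pair in the shorter walk, and lifts it back (replacing the shortcut step by the original up-step into $M$ preserves exactness). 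So your approach is viable, but as written this step is a gap --- the very step the paper outsources to its citation.
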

The Hofer-Zehnder capacity of a coadjoint orbit of the unitary group
is in contrast with its Gromov width that is equal to the smallest
weight of the weighted Cayley graph of $S_n$ defined in the previous
theorem (see e.g. Caviedes \cite{caviedes}, Pabiniak
\cite{pabiniak}). In particular, the Hofer-Zehnder capacity of a
coadjoint orbit isomorphic with a projective space coincides with
its Gromov width and the Hofer-Zehnder capacity of a coadjoint orbit
isomorphic with a Grassmannian manifold is equal to an integer
multiple of its Gromov's width, and we recover results of Hofer and
Viterbo for the projective space \cite{HoferViterbo} and G. Lu for
the Grassmannian manifold \cite{Lu}.

We suggest that the reader compares our results with the ones of
Loi, Mossa and Zuddas \cite{LoiMossaZuddas} where they estimate the
Hofer-Zenhder capacity of Hermitian symmetric spaces and with the
ones of Hwang and Suh \cite{HwangSuh} where they compute the
Hofer-Zehnder capacity of symplectic manifolds with Hamiltonian
semifree circle actions in terms of their moment map.

This paper is organized as follows: in the second section, we review
the definition of Hofer-Zehnder capacity of a symplectic manifold
and state G.Lu's theorem that bounds the Hofer-Zehnder capacity of a
symplectic manifold in terms of its Gromov-Witten invariants. In the
third section, we recall background on the geometry of coadjoint
orbits of compact Lie groups.  In the fourth section, we define the
Bruhat graph and indicate its relation with the Hofer-Zehnder
capacity of coadjoint orbits. In the fifth section, we compute the
Hofer-Zehnder capacity of coadjoint orbits of the unitary group. In
the sixth section, we recall results of Postnikov concerning the
minimal degrees of paths in the Bruhat graph, and explain how they
can be used to find more optimal upper bounds for the Hofer-Zehnder
capacity of regular coadjoint orbits. In the seventh section, we
explain how to bound from below the Hofer-Zehnder capacity of a
coadjoint orbit using the moment map of the Hamiltonian group action
of a maximal torus. In the eight section we write explicitly our
bounds for every simple compact Lie group according to the type.

\section{Hofer-Zehnder capacity and Gromov-Witten invariants}

Let $(M, \w)$ be a closed symplectic manifold. A
\textit{Hamiltonian} is a smooth function $H:(\R/\Z)\times M \to
\R.$ A Hamiltonian is \textit{autonomous} if it is time-independent.
The Hamiltonian vector field of $H$ is the time-dependent vector
field $X_H$ defined by
$$
d(H(t, \cdot))=\iota_{X_{H(t, \cdot)}}\w.
$$
The \textit{oscillation} of an autonomous Hamiltonian $H:M\to \R$ is
$$
\operatorname{osc}{H}:=\max{H}-\min{H}
$$
A Hamiltonian function $H:M\to \R $ is \textit{slow} if all periodic
orbits of the Hamiltonian vector field $X_H$ of period less than one
are constant.

The \textit{Hofer-Zehnder capacity} of $(M, \w)$ is defined as
$$
\operatorname{c_{HZ}}(M, \w):=\sup{\bigl
\{\operatorname{osc}{H}\,\,|\,\, H:M\to\R \text{ slow} \bigr\}}
$$
Let $J$ be a Fredholm regular almost complex structure compatible
with $\w,$ (see the definition for instance in McDuff and Salamon
\cite{mcduff}). Let $d$ be a class in $H_2(M;\Z).$ We denote by
$\operatorname{GW}_{d, k}(a_1, a_2, \cdots, a_k)$ the
\textit{Gromov-Witten invariant} that roughly speaking counts the
number of $J$-holomorphic spheres in $M$ in the class $d \in H_2(M;
\Z)$ that meet cycles representing the homology classes $a_1, a_2,
\cdots, a_k \in H_*(M, \Z).$ The following theorem due to G. Lu
bounds from above the Hofer-Zehnder capacity of a closed symplectic
manifold in terms of its Gromov-Witten invariants \cite{Lu}.

\begin{teor}[G. Lu \cite{Lu}]\label{Lu}
Let $(M, \w)$ be a closed symplectic manifold. Suppose that $(M,
\w)$ admits a nonzero Gromov-Witten invariant of the form
$$
\operatorname{GW}_{d,k}([\operatorname{pt}], [\operatorname{pt}],
a_2, \cdots, a_k)
$$
for some $k\in \Z_{\geq 1}, d\in H_2(M; \Z)$ and $a_2, \cdots,
a_k\in H_*(M; \Z).$ Then
$$
\operatorname{c_{HZ}}(M, \w)\leq \w(d\,)
$$
\end{teor}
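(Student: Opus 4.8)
The plan is to prove the contrapositive: I show that if $H:M\to\R$ is an autonomous Hamiltonian with $\osc H>\w(d)$, then $H$ is not slow, i.e.\ its flow carries a nonconstant periodic orbit of period strictly less than one. Granting this, the defining supremum of $\operatorname{c_{HZ}}(M,\w)$ ranges only over Hamiltonians with $\osc H\leq\w(d)$, which yields the bound. First I would normalize $H$: after subtracting a constant and reparametrizing time, I would replace $H$ by a Hamiltonian whose profile is steep on the region where the pseudoholomorphic curves concentrate and flat near the extrema, so that any periodic orbit extracted from the analysis is genuinely nonconstant and its period can be read off from the slope of the profile.

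The analytic heart is to manufacture such an orbit from the nonvanishing Gromov--Witten invariant. Fixing generic representatives $p$ and $q$ of the two point classes $[\pt]$ together with cycles $Z_2,\dots,Z_k$ representing $a_2,\dots,a_k$, the hypothesis $\GW_{d,k}([\pt],[\pt],a_2,\dots,a_k)\neq 0$ guarantees, for a Fredholm regular $\w$-compatible $J$, a nonempty and (after cutting by the constraints) compact zero-dimensional moduli space of $J$-holomorphic spheres in class $d$ through $p$ and $q$ and meeting the $Z_i$. I would then turn on the Hamiltonian term along a continuation homotopy, studying maps from the sphere that solve a Floer-perturbed Cauchy--Riemann equation $\bar\partial_{J}u+(\text{Hamiltonian term from }H)=0$ with the same incidence conditions. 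Because the unperturbed count is nonzero, a cobordism argument forces the perturbed moduli space to remain nonempty as the size of the perturbation varies.

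Next I would run the energy estimate. For a solution $u$ of the perturbed equation the geometric energy $E(u)=\int|\partial_s u|^2$ is controlled from above by $\w(d)$ together with a contribution measured by $\osc H$: the topological area of $u$ is $\w(d)$, and the action identity expresses the energy as this area corrected by the action gained along the inserted Hamiltonian flow. Under the standing assumption $\osc H>\w(d)$, letting the perturbation grow and applying Gromov compactness to the nonempty perturbed moduli spaces produces a limit configuration in which the Hamiltonian contribution cannot vanish: the limit contains a nonconstant trajectory of $X_H$, and the energy bound pins its period below one. Bubbling is controlled because the two point incidence conditions together with the regularity of $J$ prevent all of the area $\w(d)$ from escaping into sphere bubbles, so the periodic orbit survives in the limit, contradicting slowness.

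The main obstacle, as in all arguments of this type, is the PDE analysis packaged into the last two paragraphs: establishing transversality, compactness, gluing and coherent orientations for the Hamiltonian-perturbed moduli spaces, so that the nonvanishing of the Gromov--Witten count genuinely propagates to the perturbed problem, and then extracting from Gromov compactness a nonconstant periodic orbit whose period is sharply controlled by the area $\w(d)$ rather than merely bounded. This is precisely the content of G.\ Lu's construction of pseudo-symplectic capacities, and I would invoke \cite{Lu}, \cite{glu2} for this Fredholm and compactness machinery rather than reprove it from scratch.
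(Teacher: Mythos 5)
The paper does not actually prove this statement: Theorem~\ref{Lu} is imported verbatim from G.~Lu's work \cite{Lu} and used as a black box, so there is no internal proof to compare yours against. Your sketch reproduces the known strategy behind results of this type (Hofer--Viterbo, Liu--Tian, Lu): argue the contrapositive, deform the Cauchy--Riemann equation by a Hamiltonian term, use the nonvanishing two-point Gromov--Witten invariant together with a cobordism argument to keep the perturbed moduli spaces nonempty, and combine the energy/action identity with Gromov compactness to extract a nonconstant periodic orbit of period less than one whenever $\osc H > \w(d)$. You also correctly identify that the Fredholm, compactness and gluing analysis is the entire content of the theorem and defer it to \cite{Lu}, \cite{glu2}; since those are precisely the steps where the argument could fail, your proposal is a correct outline resting on Lu's machinery rather than an independent proof --- which is exactly how the paper itself treats the result, so there is no mismatch. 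One caveat worth flagging: for a general closed symplectic manifold the Gromov--Witten invariants in Lu's theorem cannot be defined via a generic Fredholm regular $\w$-compatible $J$ alone; that device suffices in the semipositive or, as in this paper's application, K\"ahler homogeneous setting, but Lu's theorem in the stated generality rests on the virtual moduli cycle techniques of Liu--Tian. So the clause ``for a Fredholm regular $\w$-compatible $J$'' in your second paragraph should be replaced by the virtual-cycle formulation if you want the statement for arbitrary closed $(M,\w)$, as claimed.
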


\section{Geometry of Coadjoint orbits}\label{coadjointorbits}

In this section we establish the Lie theoretical convention that is
used through the rest of the paper. Most of the material can be
found in the classical literature that is concerned about the
geometry and topology of coadjoint orbits such as Bernstein, Gelfand
and Gelfand \cite{GelfandBernstein} and Kirillov \cite{Orbit}.

Let $G$ be a compact Lie group, $\mathfrak{g}$ be its Lie algebra
and $\mathfrak{g}^*$ be the dual of $\mathfrak{g}$. Let $(\cdot\, ,
\cdot)$ denote an adjoin invariant inner product defined on
$\mathfrak{g}.$ We identify the Lie algebra $\mathfrak{g}$ and its
dual $\mathfrak{g}^*$ via this inner product. Let $\lambda\in
\mathfrak{g}^*$ and $\mathcal{O}_\lambda\subset \mathfrak{g}^*$ be
the coadjoint orbit passing through $\lambda.$ Let $\w_\lambda$ be
the \textit{Kostant-Kirillov-Souriau form} defined on
$\mathcal{O}_\lambda$ by
$$
\w_{\lambda}(\hat{X}, \hat{Y})=\langle\lambda, [X, Y] \rangle \ \ \
X, Y \in \mathfrak{g},
$$
where $\hat{X}, \hat{Y}$ are the vector fields on $\mathfrak{g}^*$
generated by the coadjoint action of $G.$ The form $\w_\lambda$ is
closed and non-degenerate thus defining a symplectic structure on
$\mathcal{O}_\lambda.$

We denote by $G_{\mathbb{C}}$ the complexification of the Lie group
$G.$ Let $P\subset G_{\mathbb{C}}$ be a parabolic subgroup of
$G_{\mathbb{C}}$ such that $\mathcal{O}_\lambda \cong
G_{\mathbb{C}}/P.$ The quotient of complex Lie groups
$G_{\mathbb{C}}/P$ allows us to endow $\mathcal{O}_\lambda$ with a
complex structure $J$ compatible with $\w_\lambda$ so the triple
$(\mathcal{O}_\lambda, \w_\lambda, J)$ is a K\"ahler manifold. The
almost complex structure $J$ is Fredholm regular (see e.g. McDuff
and Salamon \cite{mcduff}*{Proposition 7.4.3}).

Let $T\subset G$ be a maximal torus and $\mathfrak{t}$ denote its
Lie algebra. Let $R \subset \mathfrak{t}^*$ be the root system of
$T$ in $G$ so
$$
\mathfrak{g}_{\mathbb{C}}=\mathfrak{t}_{\mathbb{C}} \oplus
\bigoplus_{\alpha \in R}\mathfrak{g}_\alpha,
$$
where
$$
\mathfrak{g}_\alpha:=\{x\in \mathfrak{g}_{\mathbb{C}}:[\,h\,,
x\,]=\alpha(h)\,x \, \text{ for all }h\in
\mathfrak{t}_{\mathbb{C}}\}$$ is the root space associated with the
root $\alpha \in R.$ Let $R^{+}\subset R$ be a choice of positive
roots with simple roots $S \subset R^+.$ Let $B\subset
G_{\mathbb{C}}$ be the Borel subgroup with Lie algebra
$$
\mathfrak{b}=\mathfrak{t}_{\mathbb{C}}\oplus \bigoplus_{\alpha \in
R^+}\mathfrak{g}_{\alpha}
$$

Each root $\alpha \in R$ has a coroot $\check{\alpha}\in
\mathfrak{t}.$ The coroot $\check{\alpha}$ is identified with
$\frac{2\alpha}{(\alpha, \alpha)}\in \mathfrak{t}$ via the invariant
inner product $(\cdot\,,\cdot).$ The system of coroots is the set
$\check{R}=\{\check{\alpha}:\alpha \in R\}$ and the simple coroots
is the set $\check{S}=\{\check{\alpha}:\alpha \in S\}.$

Every root $\alpha \in R$ defines a reflection $s_\alpha$ on
$\mathfrak{t}^*$ given by
\begin{align*}
s_\alpha:\mathfrak{t}^* &\to \mathfrak{t}^*\\
t  &\mapsto t-\langle t, \check{\alpha}\rangle\, \alpha,
\end{align*}
where $\langle\cdot\, ,\cdot\, \rangle$ denotes the standard pairing
$\langle\cdot\, , \cdot\,\rangle:\mathfrak{t}^*\otimes \mathfrak{t}
\to \R.$ The group $W$ generated by the set of reflections
$\{s_\alpha\}_{\alpha\in R}$ is the \textit{Weyl group} of $G$
relative to $T.$ It is known that the Weyl group $W$ can be
identified with $N_G(T)/T.$

We can associate to the parabolic subgroup $P\subset G_{\mathbb{C}}$
a subset of simple roots defined by
$$
S_P:=\{\alpha\in S: \langle \lambda, \check{\alpha}\rangle\ne 0\}.
$$
We denote by $W_P$ the Weyl group of $P$ generated by the set of
simple roots $S_P.$ The Weyl group $W_P$ is identified with
$N_P(T)/T.$ Also, set $R_P:=R \cap \Z S_P$ and  $R_P^+:=R^+ \cap \Z
S_P,$ where $\Z S_P=\operatorname{span}_{\mathbb{Z}}(S_P).$

The \textit{Weyl chamber} relative to the set of simple roots $S$ is
the convex polyhedron
$$
\mathfrak{t}_+^*:=\{\gamma\in \mathfrak{t}^*: \langle \gamma,
\check{\alpha}\rangle \geq 0 \text{ for all }\alpha\in S\}
$$
The vector space $\mathfrak{t}^*$ can be decomposed as the union of
convex polyhedrons
$$
\mathfrak{t}^*=\bigcup_{w\in W}w\bigl( \mathfrak{t}_+^*\bigr)
$$
whose interiors are disjoint. For the coadjoint orbit
$\mathcal{O}_\lambda,$ there exists $\lambda'\in \mathfrak{t}_+^*$
such that $\mathcal{O}_\lambda\cap
\mathfrak{t}^*=\{w(\lambda')\}_{w\in W}.$ Indeed, the group $T$ acts
hamiltonially on $\mathcal{O}_\lambda $ with moment map
$\phi:\mathcal{O}_\lambda \hookrightarrow \mathfrak{t}^*$ equals to
the composition of the projection map $\mathfrak{g}^*\to
\mathfrak{t}^*$ with the inclusion map $\mathcal{O}_\lambda
\hookrightarrow \mathfrak{g}^*.$ The image of $\phi$ is the convex
hull of $\{w(\lambda)\}_{w\in W}.$ We always assume in what follows
that $\lambda\in \mathfrak{t}_+^*.$

For $w\in W,$ the \textit{length} $l(w)$ of $w$ is defined as the
minimum number of simple reflections $s_\alpha\in W, \alpha\in S,$
whose product is $w.$ The Weyl group $W$ has a unique longest
element that we denote by $w_0.$

Let $B^{\operatorname{op}}:=w_0Bw_0\subset G_{\mathbb{C}}$ be the
\textit{Borel subgroup opposite} to $B.$ For $w\in W/W_P,$ let
$X(w):=\overline{BwP/P} \subset G_{\mathbb{C}}/P$ and
$Y(w):=\overline{B^{\operatorname{op}}wP/P} \subset
G_{\mathbb{C}}/P$ be the \textit{Schubert variety} and the
\textit{opposite Schubert variety} associated with $w,$
respectively. We denote by $\sigma_w$ and $\check{\sigma}_{w}$ the
fundamental classes in the homology group $H_*(G_{\mathbb{C}}/P;
\Z)$ of $Y(w)$ and $X(w),$ respectively. Note that
$\check{\sigma}_{w}=\sigma_{w_ow}=\sigma_{\check{w}},$ where
$\check{w}:=w_0w.$ The set of Schubert classes $\{\sigma_w\}_{w\in
W/W_P}$ forms a free $\Z$-basis of $H_*(G_{\mathbb{C}}/P; \Z),$ and
the set of Schubert classes $\{\check{\sigma}_{w}\}_{w\in W/W_P}$ is
the dual basis of $\{\sigma_w\}_{w\in W/W_P}$ with respect to the
Poincar\'e intersection pairing.

The \textit{Bruhat order} $\prec$ on $W/W_P$ is defined by $u \prec
v$ if $X(u)\subset X(v).$

\section{Bruhat Graph}

In this section we define the Bruhat graph and indicate its relation
with the Hofer-Zehnder capacity of a coadjoint orbit of compact Lie
group.

We keep the convention of the last section. Let $G$ be a compact Lie
group. Let $T\subset G$ be a maximal torus, $B\subset
G_{\mathbb{C}}$ be a Borel subgroup and $P\subset G_{\mathbb{C}}$ be
a parabolic subgroup such that $T\subset B \subset P.$ Let $R$ and
$S$ be the system of roots and simple roots determined by $T$ and
$B,$ respectively.


The \textit{Bruhat graph} is the graph on $W/W_P$ where two elements
are joined by an edge if they differ by one reflection. More
precisely, there is an edge joining $u$ with  $v$ if and only if
there exists a positive root $\alpha \in R^+-R^+_P$ such that
$$
v=u\cdot s_\alpha \mod{W_P}.
$$
In Figure \ref{bruhats3}, we show the Bruhat graph of the Weyl group
of $U(3).$ The standard set of simple roots of $U(3)$ is
$\{\alpha_1=e_1-e_2, \alpha_2=e_2-e_3\}\subset \R^3,$ where $\{e_1,
e_2, e_3\}$ denotes the standard basis of $\R^3.$ The Weyl group of
$U(3)$ is the symmetric group $S_3$ generated by the simple
reflections $s_1:=s_{e_1-e_2}=(1\,2), s_2:=s_{e_2-e_3}=(2\, 3).$

\begin{figure}[h!]
\centering
\includegraphics[scale=0.5]{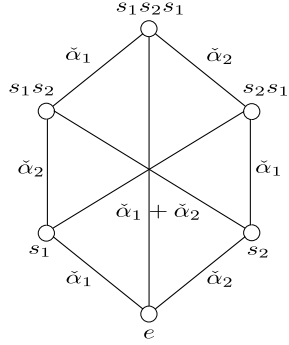}
\caption{Bruhat graph of $S_3$}\label{bruhats3}
\end{figure}

The vertices and edges of the Bruhat graph are in one-to-one
correspondence with the $T$-fixed points and irreducible
$T$-invariant curves of $G_{\mathbb{C}}/P,$ respectively. The
collection of cosets $\{wP\}_{w\in W/W_P}$ is the set of all
$T$-fixed points of $G_{\mathbb{C}}/P.$ For each positive root
$\alpha \in R^+-R^+_P$ there is a unique irreducible $T$-invariant
curve $C_\alpha$ that contains $1\cdot P$ and $s_\alpha\cdot P.$
Indeed, $C_\alpha:=\operatorname{Sl}(2, \C)_\alpha\cdot P/P$ where
$\operatorname{Sl}(2, \C)_\alpha\subset G_{\mathbb{C}}$ is the
subgroup of $G_{\mathbb{C}}$ with Lie algebra
$$
\mathfrak{g}_\alpha\oplus \mathfrak{g}_{-\alpha}\oplus
[\mathfrak{g}_\alpha,\mathfrak{g}_{-\alpha}]\subset
\mathfrak{t}^*_{\mathbb{C}}.
$$
We identify the homology group $H_2(G_{\mathbb{C}}/P; \Z)$ with
$\Z\check{S}/\Z\check{S}_P$ via the transformation
$$
[C_\alpha]\mapsto \check{\alpha}+\Z S_P
$$
(see e.g. Fulton and Woodward \cite{fultonw}). We weight the edges
of the Bruhat graph with elements in $H_2(G_{\mathbb{C}}/P; \Z)\cong
\Z \check{S}/\Z \check{S}_P.$ If $u$ and $v$ differ by the
reflection $s_\alpha,$ then the weight of the corresponding edge is
$\check{\alpha}+\Z \check{S}_P.$ We define an ordering on the set of
degrees $H_2(G_{\mathbb{C}}/P; \Z)$ as follows: for $c, d\in
H_2(G_{\mathbb{C}}/P; \Z),$ we say that $c\leq d$ if there exists
$n_\alpha \in \Z_{\geq 0}$ such that
$$
d-c =\sum_{\alpha\in S-S_P}n_\alpha\check{\alpha}\mod \Z S_P
$$
A \textit{chain} from $u$ to $v$ in $W/W_P$ is a sequence $u_0, u_1,
\cdots, u_r \in W/W_P$ such that $u_{i}$ and $u_{i-1}$ are adjacent
for $1\leq i \leq r,$ $u \prec u_0$ and $u_r \prec \check{v}=w_0v.$
A chain from $u$ to $v$ corresponds to a sequence of $T$-invariant
curves $C_1, C_2, \cdots, C_r$ with $C_1$ meeting $Y(u)$ and $C_r$
meeting $X(\check{v}).$ The \textit{degree} of the chain is the sum
of the classes $[C_i]$ of the curves. A \textit{path} from $u$ to
$v$ in $W/W_P$ is a sequence $u_0, u_1, \cdots, u_r \in W/W_P$ such
that $u_{i}$ and $u_{i-1}$ are adjacent for $1\leq i \leq r,$
$u=u_0$ and $u_r=v.$ A path in the Bruhat graph coincides with the
standard notion of path in graph theory. The degree of a path is
defined in the same way as the degree of a chain.

The following result due to Fulton and Woodward establishes the
relation between chains in the Bruhat graph of $W/W_P$ and the
Gromov-Witten invariants of $G_{\mathbb{C}}/P.$

\begin{teor}[Fulton-Woodward \cite{fultonw}]\label{upperbound}
Let $u, v\in W/W_P$ and $d\in H_2(G_{\mathbb{C}}/P; \Z).$ The
following are equivalent:

\begin{enumerate}

\item There is a chain of degree $c\leq d$ between $u$ and $v$ in the
Bruhat graph of $W/W_P.$

\item There exists a sequence $C_0, C_1, \cdots, C_r$ of $T$-invariant
curves with $C_0$ meeting $Y(u)$ and $C_r$ meeting $X(\check{v}),$
with $C_{i-1}$ meeting $C_i$ for $1\leq i \leq r,$ and with
$\sum_{i=0}^r [C_i]\leq d.$

\item There exist a degree $c\leq d$ and $w$ in
$W/W_P$ such that
$$
\GW_{c,3}(\sigma_u, \sigma_v, \sigma_w)\ne 0
$$
\end{enumerate}
\end{teor}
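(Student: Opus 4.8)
The plan is to separate the purely combinatorial--geometric dictionary, namely the equivalence of (1) and (2), from the enumerative content that relates both to (3), and to prove the cycle (1) $\Leftrightarrow$ (2), (3) $\Rightarrow$ (2), (2) $\Rightarrow$ (3).

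First I would establish (1) $\Leftrightarrow$ (2) directly from the correspondence set up above between the Bruhat graph and the $T$-invariant geometry of $G_{\mathbb{C}}/P$. Under this dictionary a vertex $u_i\in W/W_P$ is the $T$-fixed point $u_iP$ and an edge is one of the curves $C_\alpha$; two $T$-invariant curves meet exactly when they share a $T$-fixed point, so a connected configuration $C_0,\ldots,C_r$ with $C_{i-1}$ meeting $C_i$ traces out a walk through $T$-fixed points, i.e.\ a chain $u_0,\ldots,u_r$ with adjacent successive vertices. The boundary conditions match because a fixed point $u_0P$ lies on $Y(u)$ precisely when $u\prec u_0$ and $u_rP$ lies on $X(\check{v})$ precisely when $u_r\prec\check{v}$, by the standard description of the $T$-fixed locus of (opposite) Schubert varieties via the Bruhat order. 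Finally the degree of the chain is $\sum_i[C_i]$ by the definition of the edge weights, so the condition $\sum_i[C_i]\le d$ is literally $c\le d$ for the degree $c$ of the chain.

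For (3) $\Rightarrow$ (2) I would argue by degeneration. A nonzero invariant $\GW_{c,3}(\sigma_u,\sigma_v,\sigma_w)\ne 0$ forces a genus-zero stable map of degree $c$ whose three marked points lie on generic translates of $Y(u),Y(v),Y(w)$; here I use that genus-zero Gromov--Witten invariants of $G_{\mathbb{C}}/P$ are enumerative, so a nonzero count produces an honest curve. Acting by a generic one-parameter subgroup of $T$ and invoking properness of the moduli space of stable maps, this curve degenerates to a $T$-fixed stable map of the same degree, whose image is a union of $T$-invariant curves meeting along $T$-fixed points. Passing the incidences with $Y(u)$ and $X(\check{v})$ to the limit yields a connected configuration $C_0,\ldots,C_r$ with $C_0$ meeting $Y(u)$, $C_r$ meeting $X(\check{v})$, and $\sum_i[C_i]=c\le d$, which is exactly (2).

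The hard direction, and the main obstacle, is (2) $\Rightarrow$ (3). Here the two fixed constraints are forced on us: meeting $Y(u)$ is the constraint $\sigma_u=[Y(u)]$, while meeting $X(\check{v})$ is the constraint $\sigma_v$, since $[X(\check{v})]=\check{\sigma}_{\check{v}}=\sigma_{w_0\check{v}}=\sigma_v$. The plan is to assemble the configuration $C_0,\ldots,C_r$ into a $T$-fixed genus-zero stable map $f$ of degree $c=\sum_i[C_i]$ meeting generic translates of $Y(u)$ and $X(\check{v})$, and then to choose the third Schubert class $\sigma_w$ so that the expected dimension of the space of degree-$c$ stable maps through the three cycles drops to zero; then $\GW_{c,3}(\sigma_u,\sigma_v,\sigma_w)$ is a genuine count to which $f$ contributes. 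The crucial point is that this contribution cannot cancel: by the positivity of genus-zero Gromov--Witten invariants of $G_{\mathbb{C}}/P$ (the relevant Gromov--Witten loci are nonempty of the expected dimension, so the invariants are honest non-negative counts), the existence of a single contributing map forces $\GW_{c,3}(\sigma_u,\sigma_v,\sigma_w)>0$. The delicate work is therefore dimensional: verifying that the incidences with $Y(u)$ and $X(\check{v})$ occur in the expected codimension, and that $w$ can always be chosen to make the count zero-dimensional so that $f$ actually contributes. Arranging these transversality and dimension conditions in every case is the technical heart of the argument and the step I expect to be the main obstacle.
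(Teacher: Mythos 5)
You should first note that the paper itself contains no proof of this statement: it is imported verbatim from Fulton--Woodward \cite{fultonw}, so your proposal can only be compared with their argument. Your dictionary for (1) $\Leftrightarrow$ (2) (edges $=$ $T$-invariant curves, two distinct irreducible $T$-invariant curves meet only in $T$-fixed points, and the boundary conditions $u\prec u_0$, $u_r\prec\check v$ translate into $u_0P\in Y(u)$, $u_rP\in X(\check v)$) is correct, and your degeneration argument for (3) $\Rightarrow$ (2) is essentially the right mechanism, provided you first specialize the generic translates back to $Y(u)$ and $X(\check v)$ themselves (legitimate, since nonemptiness of the incidence locus is a closed condition by properness of the moduli space) and only then take the limit under a generic one-parameter subgroup of $T$.

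The genuine gap is in (2) $\Rightarrow$ (3). Your central claim --- that the $T$-fixed stable map assembled from the chain ``contributes'' to $\GW_{c,3}(\sigma_u,\sigma_v,\sigma_w)$ once $\sigma_w$ is chosen of complementary dimension, and that positivity then forces the invariant to be nonzero --- is false as stated. The invariant counts maps meeting \emph{general} translates of the three cycles; the existence of a solution for one \emph{special} position of the cycles does not imply existence in general position (only the converse holds, again by properness). Concretely, in $\cpi\times\cpi$ the two $T$-fixed points $(0,0)$ and $(\infty,0)$ are joined by the $T$-invariant curve $\cpi\times\{0\}$ of class $d=[\cpi\times\{\operatorname{pt}\}]$, yet every Gromov--Witten invariant of degree $d$ with two point insertions vanishes, because no curve of class $d$ passes through two general points. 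What rescues the theorem is the exact relative position of the two constraints: $Y(u)$ is $B^{\operatorname{op}}$-invariant while $X(\check v)$ is $B$-invariant, and the Fulton--Woodward proof exploits this. The locus $S\subset G_{\mathbb{C}}$ of those $g$ for which some degree-$c$ connected configuration meets $g\,Y(u)$ and $X(\check v)$ is closed (properness) and invariant under left multiplication by $B$ and right multiplication by $B^{\operatorname{op}}$; your chain shows $e\in S$, and since the double coset $B\cdot B^{\operatorname{op}}$ is open and dense in $G_{\mathbb{C}}$, it follows that $S=G_{\mathbb{C}}$, i.e.\ curves exist through \emph{all} translates. Only after this moving step do Kleiman transversality, the complementary-dimension choice of $\sigma_w$, and enumerativity of genus-zero invariants of $G_{\mathbb{C}}/P$ give the nonvanishing. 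This bi-invariance/big-cell argument is the missing idea; the transversality and dimension bookkeeping that you flag as the main obstacle cannot, by themselves, bridge the gap between special and general position.
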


Now we state the relation between chains in the Bruhat graph and the
Hofer-Zehnder capacity of coadjoint orbits.

\begin{teor}\label{Ca}
Assume that $\lambda$ lies in the Weyl chamber $\mathfrak{t}^*_+$
relative to the set of simple roots $S$ and the coadjoint orbit
$\mathcal{O}_\lambda$ passing through $\lambda$ is isomorphic with
$G_{\mathbb{C}}/P.$ We endow the coadjoint orbit
$\mathcal{O}_\lambda$ with its Kostant-Kirillov-Souriau symplectic
form $\w_\lambda$. Then
$$
\operatorname{c_{HZ}}(\mathcal{O}_\lambda, \w_\lambda)\leq \min_d\,
\w_\lambda(d),
$$
where the minimum is taken over all the degrees $d\in
H_2(\mathcal{O}_\lambda; \Z)\cong \Z\check{S}/\Z\check{S}_P$ of
\textit{paths} joining $[e\,]$ with $[w_0]$ in the Bruhat graph of
$W/W_P.$
\end{teor}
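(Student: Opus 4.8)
The plan is to feed a nonzero two-pointed Gromov--Witten invariant into G. Lu's bound (Theorem \ref{Lu}) and to manufacture such an invariant from each path by the Fulton--Woodward criterion (Theorem \ref{upperbound}). First I would pin down the point class in $H_*(G_{\mathbb{C}}/P;\Z)$. Since $X(e)=\overline{BeP/P}=\{eP\}$ is a single point, its fundamental class $\check{\sigma}_e$ equals $[\pt]$; together with the relation $\check{\sigma}_e=\sigma_{w_0}$ recalled in Section \ref{coadjointorbits}, this gives $[\pt]=\sigma_{w_0}$. Hence to apply Theorem \ref{Lu} it suffices to produce a nonzero invariant of the shape $\GW_{c,3}(\sigma_{w_0},\sigma_{w_0},\sigma_w)$.

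Next I would translate ``path from $[e]$ to $[w_0]$'' into the chain language of Theorem \ref{upperbound} taken with $u=v=w_0$. By definition a chain from $w_0$ to $w_0$ is a sequence $u_0,\dots,u_r$ with consecutive terms adjacent and with $w_0\prec u_0$, $u_r\prec\check{v}=w_0w_0=e$. Since $[w_0]$ and $[e]$ are respectively the maximum and minimum of $(W/W_P,\prec)$, these two conditions force $u_0=[w_0]$ and $u_r=[e]$; thus chains from $w_0$ to $w_0$ are precisely the paths in the Bruhat graph joining $[w_0]$ and $[e]$ (equivalently $[e]$ and $[w_0]$), and the degree of such a chain is the degree of the corresponding path. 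Consequently, given any path of degree $d$, statement (1) of Theorem \ref{upperbound} holds with $c=d$, so statement (3) yields a degree $c'\leq d$ and an element $w\in W/W_P$ with $\GW_{c',3}(\sigma_{w_0},\sigma_{w_0},\sigma_w)\neq 0$, that is, a nonzero Gromov--Witten invariant carrying two point constraints.

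Applying Theorem \ref{Lu} then gives $c_{\operatorname{HZ}}(\mathcal{O}_\lambda,\w_\lambda)\leq\w_\lambda(c')$. To replace $c'$ by the actual path degree $d$ I would invoke monotonicity of $\w_\lambda$ on $H_2$: writing $d-c'=\sum_{\alpha\in S-S_P}n_\alpha\check{\alpha}\mod \Z S_P$ with $n_\alpha\geq 0$, and using that the symplectic area of the invariant curve $C_\alpha$ is $\w_\lambda(\check{\alpha})=\langle\lambda,\check{\alpha}\rangle\geq 0$ because $\lambda\in\mathfrak{t}^*_+$, I obtain $\w_\lambda(c')\leq\w_\lambda(d)$. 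Therefore $c_{\operatorname{HZ}}(\mathcal{O}_\lambda,\w_\lambda)\leq\w_\lambda(d)$ for every path, and minimizing over all path degrees $d$ delivers the stated bound. The step I expect to demand the most care is the dictionary of the second paragraph: correctly matching the two Bruhat-order conditions defining a chain from $w_0$ to $w_0$ with the extremal cosets $[w_0]$ and $[e]$, and then the monotonicity estimate that absorbs the possible drop from $d$ to $c'\leq d$.
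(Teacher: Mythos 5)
Your proposal is correct and takes essentially the same route as the paper's proof: translate a path from $[e]$ to $[w_0]$ into a chain via Theorem \ref{upperbound} with $u=v=w_0$, extract a nonzero invariant $\GW_{c,3}([\pt],[\pt],\sigma_w)$, and conclude with Theorem \ref{Lu}. You are in fact more careful than the paper, whose two-line proof asserts nonvanishing at the minimal path degree $d$ itself and leaves implicit the point-class identification $[\pt]=\check{\sigma}_e=\sigma_{w_0}$, the path-to-chain dictionary, and the monotonicity step $\w_\lambda(c')\leq\w_\lambda(d)$ that your argument supplies.
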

\begin{proof}
Let $d$ be minimal among the set of all degrees of paths joining
$P/P$ with $w_0P/P$ in the Bruhat graph. 
According to the Theorem \ref{upperbound} of Fulton and Woodward,
there exists $u\in W/W_P$ such that
$$
\operatorname{GW}_{d, 3}([\operatorname{pt}], [\operatorname{pt}],
\sigma_{u})\ne 0
$$
By Theorem \ref{Lu} of G.Lu,
$$
\operatorname{c_{HZ}}(\mathcal{O}_\lambda, \w_\lambda) \leq
\w_\lambda(d),
$$
and we are done.
\end{proof}

\begin{rk}
A path in the Bruhat graph joining $P/P$ with $w_0P/P$ is the same
as an ordered sequence of positive roots $\alpha_1, \cdots, \alpha_r
\in R-R_P$ such that
$$
w_0= s_{\alpha_1}\cdot \ldots \cdot s_{\alpha_r} \mod W_P
$$
In this case,  the path in the Bruhat graph is given by the sequence
\begin{align*}
e &\xrightarrow{\alpha_1} s_{\alpha_1} \xrightarrow{\alpha_2}
s_{\alpha_1}s_{\alpha_2} \xrightarrow{\alpha_3}\ldots
\xrightarrow{\alpha_{r-1}} s_{\alpha_1}\cdot \ldots \cdot
s_{\alpha_{r-1}}\xrightarrow{\alpha_{r}}  w_0
\end{align*}
The degree of the corresponding sequence of $T$-invariant curves is
equal to
$$
\sum_{i=1}^r \check{\alpha_i}
$$
The symplectic area of the curve $C_\alpha$ with respect to the
Kostant-Kirillov-Souriau form is equal to $\langle \lambda,
\check{\alpha} \rangle$ (see e.g. McDuff and Tolman
\cite{McDuffTolman}[Lemma 3.9]), hence the symplectic area of the
above sequence of $T$-invariant curves is equal to
$$
\sum_{i=1}^r \langle \lambda, \check{\alpha_i} \rangle.
$$
\end{rk}

\section{Hofer-Zehnder capacity of coadjoint orbits of $U(n)$}

In this section we compute the Hofer-Zehnder capacity of coadjoint
orbits of the unitary group.

We denote by $U(n)$ the set of $n\times n$ unitary matrices and by
$\mathfrak{u}(n)$ its Lie algebra. Let $\lambda=(\lambda_1,
\lambda_2, \ldots, \lambda_n)\in \R^n$ and
$$
\mathcal{H}_\lambda:=\{A\in \mathfrak{u}(n): A^*=-A,\,
\operatorname{spectrum}{A}=i\lambda\}
$$
The unitary group $U(n)$ acts on $\mathcal{H}_\lambda$ by
conjugation. We identify the set of skew-Hermitian matrices
$\mathcal{H}_\lambda$ with a regular coadjoint orbit of $U(n)$ via
the pairing
\begin{align*}
\mathfrak{u}(n)\times \mathfrak{u}(n) &\to \R \\ (X,
Y)&\mapsto\operatorname{Trace}(XY)
\end{align*}
We denote by $\w_\lambda$ the symplectic form obtained by
identifying $\mathcal{H}_\lambda$ with a coadjoint orbit of $U(n).$

Let $T=U(1)^n\subset U(n)$ be the maximal torus of diagonal matrices
in $U(n).$ We identify the Lie algebra of $T$ with $\R^n$ and we
denote by $\{e_1, \cdots, e_n\}$ the standard basis of $\R^n.$ The
system of positive roots associated with the torus $T$ is the set of
vectors
$$
\{\alpha_{i,j}:=e_i-e_j\}_{1\leq i <j\leq n}\subset
\mathfrak{t}\cong \mathfrak{t}^*
$$
The standard system of simple roots is the set
$$
\{\alpha_i:=\alpha_{i,i+1}\}_{1\leq i < n}
$$
The corresponding Dynkin diagram is shown in Figure \ref{A}

\begin{figure}[h!]
\centering
\includegraphics[scale=0.35]{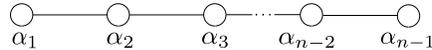}
\caption{Dynkin diagram of $A_n$}\label{A}
\end{figure}
Any $T$-fixed point of $\mathcal{H}_\lambda$ is a permutation of the
diagonal matrix $i(\lambda_1, \ldots, \lambda_n).$ Two $T$-fixed
points of $\mathcal{H}_\lambda$ are joined by one irreducible
$T$-invariant curve if they differ by one transposition.

\begin{teor}\label{HZUn}
Let $\lambda=(\lambda_1, \lambda_2, \ldots, \lambda_n)\in \R^n$ and
assume that $\lambda_1 \geq \lambda_2 \geq \ldots \geq \lambda_n.$
Then,
$$
\operatorname{c_{\operatorname{HZ}}}(\mathcal{H}_\lambda,
\w_\lambda)= \dfrac{1}{2}\sum_{k=1}^n|\lambda_k-\lambda_{n-k+1}|
$$
\end{teor}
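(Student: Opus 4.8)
The plan is to establish the two inequalities $c_{\operatorname{HZ}}(\mathcal{H}_\lambda,\w_\lambda)\le V$ and $c_{\operatorname{HZ}}(\mathcal{H}_\lambda,\w_\lambda)\ge V$, where
\[
V:=\tfrac12\sum_{k=1}^n|\lambda_k-\lambda_{n-k+1}|=\sum_{k=1}^{\lfloor n/2\rfloor}(\lambda_k-\lambda_{n-k+1}).
\]
The second equality is elementary: the summand $|\lambda_k-\lambda_{n-k+1}|$ is invariant under $k\mapsto n-k+1$, so each unordered pair is counted twice, and $\lambda_k\ge\lambda_{n-k+1}$ for $k\le\lfloor n/2\rfloor$ removes the absolute values.

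For the upper bound I would apply Theorem \ref{Ca}. Here $W=S_n$ and $w_0$ is the order-reversing permutation $k\mapsto n-k+1$, which factors into the commuting transpositions
\[
w_0=\prod_{k=1}^{\lfloor n/2\rfloor}(k,\,n-k+1)=s_{\alpha_{1,n}}s_{\alpha_{2,n-1}}\cdots .
\]
Reading off the partial products gives an explicit path $e,(1,n),(1,n)(2,n-1),\dots,w_0$ in the Bruhat graph joining $[e]$ to $[w_0]$, with consecutive vertices differing by a single reflection. Its degree is $d=\sum_{k=1}^{\lfloor n/2\rfloor}\check\alpha_{k,n-k+1}$, and by the area computation recalled in the Remark following Theorem \ref{Ca} one has $\w_\lambda(d)=\sum_{k=1}^{\lfloor n/2\rfloor}\langle\lambda,\check\alpha_{k,n-k+1}\rangle=\sum_{k=1}^{\lfloor n/2\rfloor}(\lambda_k-\lambda_{n-k+1})=V$. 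Theorem \ref{Ca} then yields $c_{\operatorname{HZ}}(\mathcal{H}_\lambda,\w_\lambda)\le\min_d\w_\lambda(d)\le V$. Pairs with $\lambda_k=\lambda_{n-k+1}$ give trivial edges of zero area, so the argument is insensitive to whether $\lambda$ is regular.

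For the lower bound I would produce a single slow Hamiltonian of oscillation exactly $V$ from the moment map $\Phi:\mathcal{H}_\lambda\to\mathfrak{t}^*\cong\R^n$ of the torus $T=U(1)^n$, whose image is the permutohedron $\operatorname{conv}\{(\lambda_{\sigma(1)},\dots,\lambda_{\sigma(n)}):\sigma\in S_n\}$. Let $\xi\in\R^n$ be the integral vector with $\xi_k=1$ for $k<\tfrac{n+1}2$, $\xi_k=-1$ for $k>\tfrac{n+1}2$, and $\xi_k=0$ if $n$ is odd and $k=\tfrac{n+1}2$, and put $\Phi^\xi=\langle\Phi,\xi\rangle$; its Hamiltonian flow is the circle $\exp(t\xi)$. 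The weights of this circle at the fixed points are the integers $\xi_i-\xi_j\in\{0,\pm1,\pm2\}$, so the largest isotropy order is $2$ and, with the standard normalization in which $\exp(t\xi)$ has period $1$, every nonconstant orbit of $X_{\Phi^\xi}$ has period $\ge\tfrac12$. Hence $H:=\tfrac12\Phi^\xi$ has all nonconstant orbits of period $\ge1$ and is slow. Since a linear functional attains its extrema at the vertices of the permutohedron, the rearrangement inequality gives $\max\Phi^\xi-\min\Phi^\xi=\sum_i\xi_i(\lambda_i-\lambda_{n-i+1})=2V$, so $\operatorname{osc}(H)=V$ and $c_{\operatorname{HZ}}(\mathcal{H}_\lambda,\w_\lambda)\ge\operatorname{osc}(H)=V$.

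The main obstacle is the slow condition in the lower bound: one must verify that $X_{\Phi^\xi}$ has no nonconstant orbit of period $<\tfrac12$, which amounts to computing the isotropy weights $\xi_i-\xi_j$ of $\exp(t\xi)$ along $\mathcal{H}_\lambda$ and fixing the normalization so that the maximal weight $\max_{i<j}|\xi_i-\xi_j|=2$ really pins the minimal period at $\tfrac12$. Once this is in place the flow of $H=\tfrac12\Phi^\xi$ is a reparametrized circle action, so its only periodic orbits are reparametrized circle orbits and fixed points; all nonconstant ones then have period exactly $1$, making $H$ slow with oscillation exactly $V$ and closing the gap with the upper bound.
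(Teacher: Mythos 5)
Your proposal is correct and follows essentially the same route as the paper: the upper bound via Theorem \ref{Ca} applied to the path of transpositions $(1,n),(2,n-1),\dots$ joining $e$ to $w_0$, and the lower bound via the circle generated by $\xi=\sum_k(e_k-e_{n-k+1})$, whose moment map has oscillation $2V$ and must be halved to become slow because the isotropy weights $\xi_i-\xi_j$ reach $\pm 2$ (equivalently, $\Z_2$ stabilizers force orbits of period $\tfrac12$). The only cosmetic difference is that you invoke the weight bound as a general principle (the paper's Theorem \ref{Ca4}), whereas the paper's own proof of this theorem argues directly with stabilizer subgroups; also note your nonconstant orbits of $H$ have period $1$ or $2$, not all exactly $1$, which does not affect slowness.
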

\begin{proof}
According to Theorem \ref{upperbound}, in order to find an upper
bound for the Hofer Zehnder capacity of $\mathcal{H}_\lambda,$ we
want to find a path of irreducible $T$-invariant curves joining the
diagonal matrix $i(\lambda_1, \lambda_2, \ldots, \lambda_n)$ with
the diagonal matrix $i(\lambda_n, \lambda_{n-1}, \ldots,
\lambda_1).$ Let us consider the path given by the following
sequence
\begin{align*} i(\lambda_1, \lambda_2, \ldots,
\lambda_{n-1}, \lambda_n) &\xrightarrow{(1,\, n)} i(\lambda_n,
\lambda_2, \ldots, \lambda_{n-1}, \lambda_1)\\ &\xrightarrow{(2,\,
n-1)} \ldots \to i(\lambda_{n}, \lambda_{n-1}, \ldots, \lambda_2,
\lambda_1),
\end{align*}
The degree of this path is equal to
$$
\sum_{k=1}^{[\frac{n-1}{2}]} \check{\alpha}_{k,\,n-k+1}
$$
and its symplectic area is equal to
$$
\w_{\lambda}\Bigl(\sum_{k=1}^{[\frac{n-1}{2}]}
\check{\alpha}_{k,\,n-k+1}\Bigr)=\sum_{k=1}^{[\frac{n-1}{2}]}\langle\lambda,
\check{\alpha}_{k,\,n-k+1}\rangle=
\dfrac{1}{2}\sum_{k=1}^n|\lambda_k-\lambda_{n-k+1}|,
$$ 
and thus
$$
\operatorname{c_{HZ}}(\mathcal{H}_\lambda, \w_\lambda)\leq
\dfrac{1}{2}\sum_{k=1}^n|\lambda_k-\lambda_{n-k+1}|.
$$
Now we show that this inequality is sharp by constructing an
admissible Hamiltonian function $H:\mathcal{H}_\lambda \to \R$ whose
oscillation is equal to the right hand side of the inequality. The
conjugation action of the torus $T$ on $\mathcal{H}_\lambda$ is
Hamiltonian with moment map given by
\begin{align*}
\mu: \mathcal{H}_\lambda &\to i\R^n \\
A &\to \operatorname{diagonal}(A)
\end{align*}
The image of the moment map $\mu$ is the convex hull of all possible
permutations of the vector $i(\lambda_1, \ldots, \lambda_n)\in \R^n$
(see e.g. Guillemin \cite{guillemin}).

For $t\in U(1)$ and $(m_1, \ldots, m_n)\in \Z^n,$ we use the
convention
$$
t^{(m_1, \ldots, m_n)}:=(t^{m_1}, \ldots, t^{m_n})\in
T=U(1)^n\subset U(n).
$$
Let
$$
\beta=\sum_{k=1}^{[\frac{n-1}{2}]}(e_k-e_{n-k+1})
$$
and $S=\{t^\beta=(t, t, \ldots, t^{-1}, t^{-1}):t\in S^1\}\subset
T.$ The action of the circle $S$ on $\mathcal{H}_\lambda$ is
Hamiltonian with moment map given by
\begin{align*}
\tilde{\mu}:\mathcal{H}_\lambda &\to i\R \\
A=(a_{ij}) &\mapsto \langle\mu(A),
\beta\rangle=a_{1,1}-a_{n,n}+a_{2,2}-a_{n-1,n-1}+\ldots
\end{align*}
The moment map image of $\tilde{\mu}$ is the interval
$$
i\Bigl[-\dfrac{1}{2}\sum_{k=1}^n|\lambda_k-\lambda_{n-k+1}|,\dfrac{1}{2}\sum_{k=1}^n|\lambda_k-\lambda_{n-k+1}|\Bigr]\subset
i\R,
$$
and thus the oscillation of $\operatorname{Im}(\tilde{\mu})$ is
equal to $\sum_{k=1}^n|\lambda_k-\lambda_{n-k+1}|.$ Unfortunately,
the function $\operatorname{Im}(\tilde{\mu})$ is not slow. This is
because, under the action of $S$ on $\mathcal{H}_\lambda,$ there are
elements in $\mathcal{H}_\lambda$ with non-trivial finite
stabilizers. All possible stabilizer subgroups of $S$ are either
$\{1\}, \Z_2$ or $S.$ If the stabilizer subgroup of a skew-Hermitian
matrix in $\mathcal{H}_\lambda$ is $\Z_2,$ the period of the orbit
passing through the skew-Hermitian matrix is one half. Otherwise the
skew-Hermitian matrix is either a $S$-fixed point or the period of
the orbit passing through the skew-Hermitian matrix is one.

The Hamiltonian function
$H=\dfrac{1}{2}\operatorname{Im}(\tilde{\mu}):\mathcal{H}_\lambda\to
\R$ fixes this problem. The orbits of $H$ are either constant or
their periods are either one or two. So, the Hamiltonian $H$ is
slow, and
$$
\operatorname{osc}(H)=\dfrac{1}{2}\sum_{k=1}^n|\lambda_k-\lambda_{n-k+1}|
\leq c_{\operatorname{HZ}}(\mathcal{H}_\lambda, \w_\lambda)
$$
and we are done.
\end{proof}
\begin{rk}
The Hofer-Zenhder capacity of the coadjoint orbit
$\mathcal{H}_\lambda$ is the same as the \textit{diameter} of the
weighted Cayley graph of $S_n$ where two permutations are joined by
an edge of weight $|\lambda_i-\lambda_j|$ if they differ by a
trasposition $(i, j).$ In this weighted Cayley graph, the distance
between the identity permutation $e$ and any other permutation
$\sigma$ is given by the expression
$$
d(1, \sigma)=\dfrac{1}{2}\sum_{i=1}^n
|\lambda_i-\lambda_{\sigma(i)}|,
$$
and we have the following rearrangement inequality
$$
0\leq \dfrac{1}{2} \sum_i |\lambda_i-\lambda_{\sigma(i)}| \leq
\dfrac{1}{2} \sum_i|\lambda_i-\lambda_{n-i+1}|
$$
(see e.g. Farnoud and Milenkovic \cite{farnoud}[Theorem 22] for the
first inequality, and Rinott \cite{rinott} and Vince
\cite{vince}[Example 2] for the second).

As an illustrative example, Figure  \ref{bgg24} shows the Bruhat
graph associated with the set of skew-Hermitian matrices
$\mathcal{H}_{(\lambda_1,\lambda_1, 0, 0)}.$ We weight the edges of
the graph with the symplectic areas of the corresponding
$T$-invariant curves. All the symplectic areas are equal to
$|\lambda_1|.$ The diameter of this weighted graph, that is the
Hofer-Zehnder capacity of $\mathcal{H}_{(\lambda_1,\lambda_1, 0,
0)},$ is  equal to $2|\lambda_1|.$

\begin{figure}[h!]
\centering
\includegraphics[scale=0.7]{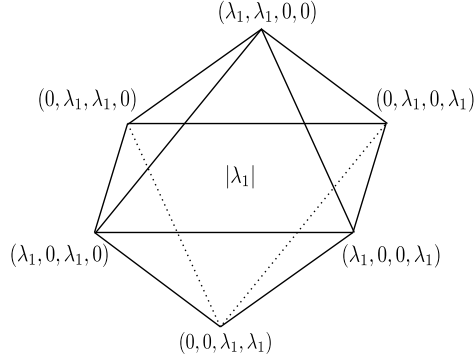}
\caption{Bruhat graph of $\mathcal{H}_{(\lambda_1,\lambda_1, 0,
0)}$}\label{bgg24}
\end{figure}

\end{rk}

\section{Upper bounds for the Hofer-Zehnder capacity of regular coadjoint orbits}

According to the previous section, we can bound from above the
Hofer-Zehnder capacity of a coadjoint orbit of a compact Lie group
by considering paths in its Bruhat graph. In order to achieve an
optimal upper bound for the Hofer-Zehnder capacity through this
method, we want to determine the paths of minimal degree joining the
identity element and the longest element of the Weyl group. A
theorem due to Postnikov states that the minimal degree of such
paths is unique for \textit{regular} coadjoint orbits of compact Lie
groups \cite{postnikov}. Recall that a coadjoint orbit of a compact
Lie group is regular if the stabilizer subgroup of any element in
the coadjoint orbit is a maximal torus.

In this section, we recall Postnikov's results and its combinatorial
formulation in terms of the \textit{quantum Bruhat graph} as it is
done in \cite{postnikov}. We also give a criterion that allow us to
characterize the path of minimal degree joining the identity element
and the longest element in the Bruhat graph.

We follow the same convention as in Section \ref{coadjointorbits}.
Let $G$ be a compact Lie group. Let $T\subset G$ be a maximal torus,
$B\subset G_{\mathbb{C}}$ be a Borel subgroup such that $T\subset B
\subset P.$ Let $R$ and $S$ be the system of roots and simple roots
determined by $T$ and $B,$ respectively.  Let $W$ be the
corresponding Weyl group and $w_0$ be the longest element in $W$
relative to $S.$ For a positive root $\alpha$ write
$$
\check{\alpha}=\sum_{\beta \in
S}\check{n}_{\alpha\beta}\check{\beta}
$$
for some nonnegative integers $\check{n}_{\alpha\beta}$ and define
the height of $\check{\alpha}$ as
$$
\operatorname{ht}(\check{\alpha}):=\sum_{\beta\in
S}\check{n}_{\alpha\beta}
$$
The \textit{quantum Bruhat graph of $W$} is a directed graph on the
elements of the Weyl group with weighted edges defined as follows:
two elements $u, v\in W$ are connected by a directed edge $u
\rightarrow v$ if and only if $v=us_\alpha$ and one of the following
two conditions is satisfied
$$
l(v)=l(u)+1 \ \ \ \text{ or } \ \ \
l(v)=l(u)+1-2\operatorname{ht}(\check{\alpha})
$$
If $l(v)=l(u)+1$ then the degree of the edge equals 0, and if
$l(v)=l(u)+1-2\operatorname{ht}(\check{\alpha})$ then the degree of
the edge equals $\check{\alpha}.$ Note that two vertices can be
connected with edges going in both directions. The \textit{degree}
of a directed path in the quantum Bruhat graph of $W$ is the sum of
degrees of its edges. The \textit{length} of a directed path is the
number of edges that it uses. A directed path in the quantum Bruhat
graph from $u$ to $v$ is \textit{shortest} if it has the minimal
possible length among all direct paths from $u$ to $v.$ Figure
\ref{qbruhat} shows the quantum Bruhat graph of $S_3$ following the
same convention as in Figure \ref{bruhats3}.

\begin{figure}[h!]
\centering
\includegraphics[scale=0.5]{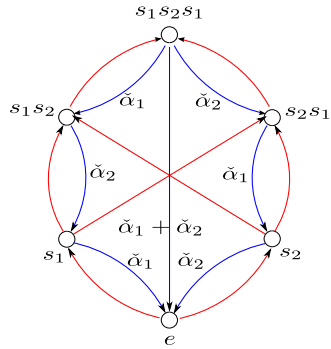}
\caption{Quantum Bruhat graph of $S_3$}\label{qbruhat}
\end{figure}

Now we recall the following result about the combinatorics of paths
in the quantum Bruhat graph of $W.$

\begin{teor}[Postnikov \cite{postnikov}]\label{postnikov}
Let $u$ and $v$ be any two Weyl group elements. There exists a
directed path from $u$ to $v$ in the quantum Bruhat graph. All
shortest paths from $u$ to $v$ have the same degree $d_{\min}(u,
v).$ The degree of any path from $u$ to $v$ is divisible by
$d_{\min}(u, v).$

Moreover, there exists $w\in W$ such that
$$
\GW_{d_{\min}(u, \check{v}), 3}(\sigma_u, \sigma_v, \sigma_w)\ne 0,
$$
and $d_{\min}(u, \check{v})$ is minimal with respect to this
property, i.e., if there exists a degree $d$ and $w\in W$ such that
$$
\GW_{d, 3}(\sigma_u, \sigma_v, \sigma_w)\ne 0
$$
then
$$
d_{\min}(u, \check{v}) \leq d.
$$
\end{teor}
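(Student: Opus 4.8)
The plan is to treat the combinatorial statements about the quantum Bruhat graph separately from the Gromov--Witten assertion, leaning on Theorem~\ref{upperbound} of Fulton and Woodward for the latter.

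I would begin with an observation that controls every edge at once. Write $\varrho$ for half the sum of the positive roots, so that $\langle \check{\alpha}, \varrho\rangle = \operatorname{ht}(\check{\alpha})$ for every positive root $\alpha.$ A classical edge $u\to us_\alpha$ has length change $+1$ and degree $\delta=0,$ while a quantum edge has length change $1-2\operatorname{ht}(\check{\alpha})$ and degree $\delta=\check{\alpha};$ in both cases
\[
\bigl(l(us_\alpha)-l(u)\bigr) + 2\langle \delta, \varrho\rangle = 1 .
\]
Summing along a directed path of length $L$ from $u$ to $v$ with total degree $D$ gives $2\langle D, \varrho\rangle = L - (l(v)-l(u)),$ so $\langle D,\varrho\rangle$ is an increasing affine function of the length. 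Hence shortest paths are exactly those minimizing the degree in the $\varrho$-direction, and all shortest paths share the same length and the same value of $\langle D,\varrho\rangle.$ Existence of some directed path then follows from strong connectivity of the graph: the classical edges alone let one ascend in Bruhat order to $w_0,$ and the local structure below supplies enough quantum (length-decreasing) edges to descend from $w_0$ to an arbitrary target.

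The combinatorial heart, and the step I expect to be the main obstacle, is upgrading ``same $\langle D,\varrho\rangle$'' to ``same degree $D=d_{\min}(u,v)$'' and to the divisibility. Here I would use the local confluence (``diamond'') properties of the quantum Bruhat graph in the style of Brenti--Fomin--Postnikov: for a descent $s_i$ of $u$ one transports edges out of $u$ to edges out of $s_i u$ in a degree-preserving way, setting up an induction on $l(u).$ The outcome is that for fixed $v$ the relation ``$w$ lies on a shortest directed path from $u$ to $v$'' defines a graded poset on which the degree is additive, namely $d_{\min}(u,v)=d_{\min}(u,w)+d_{\min}(w,v)$ whenever $w$ lies on such a path. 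Additivity forces all shortest paths to carry the common degree $d_{\min}(u,v),$ and the same mechanism expresses the degree of an arbitrary path as $d_{\min}(u,v)$ plus an effective (nonnegative integral) combination of simple coroots, which is the asserted divisibility.

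For the final statement I would invoke Theorem~\ref{upperbound}. The bridge is to identify the quantum shortest-path degree $d_{\min}(u,\check{v})$ with the minimal degree of a \emph{chain} from $u$ to $v$ in the ordinary Bruhat graph, in the sense introduced above, where a chain carries the Bruhat slack $u\prec u_0$ and $u_r\prec \check{v}=w_0 v$ at its ends: a shortest quantum path from $u$ to $\check{v}$ absorbs its degree-$0$ classical steps into this slack and keeps exactly the coroots of its quantum steps, yielding a chain of equal degree, and conversely a minimal chain straightens into a shortest quantum path, the passage $v\mapsto\check{v}$ reflecting the replacement of $\sigma_v$ by the opposite Schubert variety $X(\check{v}).$ Granting this identification, first, if $\GW_{d,3}(\sigma_u,\sigma_v,\sigma_w)\neq 0$ for some $d$ and $w,$ then $(3)\Rightarrow(1)$ of Theorem~\ref{upperbound} yields a chain of degree $c\leq d,$ and since every chain degree is $\geq d_{\min}(u,\check{v})$ by the divisibility, we get $d\geq c\geq d_{\min}(u,\check{v}),$ which is the minimality. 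Second, applying $(1)\Rightarrow(3)$ to a minimal chain, of degree exactly $d_{\min}(u,\check{v}),$ produces some $c'\leq d_{\min}(u,\check{v})$ and $w$ with $\GW_{c',3}(\sigma_u,\sigma_v,\sigma_w)\neq 0;$ by the minimality just shown $c'=d_{\min}(u,\check{v}),$ giving the desired nonvanishing invariant. The only remaining care is the bookkeeping of the duality $\check{v}=w_0 v$ and of the coroot-lattice ordering on $H_2,$ which is routine once the tilted-order additivity is in hand.
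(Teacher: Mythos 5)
First, a point of order: the paper does not prove this statement at all. It is quoted from Postnikov \cite{postnikov} as background (``we recall the following result\dots''), so there is no internal proof to compare yours against; the relevant benchmark is Postnikov's own argument, which runs through the quantum Chevalley formula, positivity of Gromov--Witten invariants, and the tilted Bruhat order machinery of Brenti--Fomin--Postnikov. Judged on its merits, your sketch gets the opening correct: the identity $\bigl(l(us_\alpha)-l(u)\bigr)+2\langle\delta,\varrho\rangle=1$ for every edge is right, and it does show that all shortest paths from $u$ to $v$ have the same length and the same scalar $\langle D,\varrho\rangle.$ But upgrading that scalar equality to equality of the full degree $D,$ and to the divisibility statement for arbitrary paths, is exactly the content of the diamond/confluence lemma that you name and do not prove; as you yourself concede, this is the main obstacle, and the proposal stops at naming the tool rather than supplying it.

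The more serious gap is the bridge to Theorem \ref{upperbound}. You claim a shortest quantum path from $u$ to $\check{v}$ ``absorbs its degree-$0$ classical steps into the slack \dots yielding a chain of equal degree.'' This fails as stated because the two degree notions count different things: the degree of a Fulton--Woodward chain is the sum of $\check{\alpha}$ over \emph{all} of its edges, whereas the degree of a directed path in the quantum Bruhat graph counts only the quantum (length-decreasing) edges. Classical edges in the middle of a shortest quantum path contribute to chain degree but not to quantum degree, and the end slack $u\prec u_0,$ $u_r\prec\check{v}$ can only absorb classical edges in an initial or terminal segment; so your direction (i) needs an unproven unimodality lemma (shortest paths can be taken down-then-up or up-then-down). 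Conversely, a chain may traverse an undirected Bruhat edge that the quantum Bruhat graph does not contain in that direction at all: in $S_4,$ the edge between $3412$ and $2413$ (reflection $(1\,4),$ coroot of height $3$) can be used by a chain in the direction $3412\to 2413,$ but there is no such directed quantum edge, since the length drops by $1$ rather than by $2\operatorname{ht}(\check{\alpha})-1=5.$ Hence your direction (ii), ``a minimal chain straightens into a shortest quantum path,'' requires at minimum the lemma $d_{\min}(x,xs_\alpha)\leq\check{\alpha}$ for every $x\in W$ and every reflection $s_\alpha$ (plus the triangle inequality for $d_{\min}$), none of which appears in the proposal. These are not bookkeeping issues but the actual substance of the theorem; Postnikov's proof circumvents them precisely by arguing through the quantum Chevalley formula rather than through an edge-by-edge matching of chains with quantum paths.
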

We consider the following lemma whose proof we review for
completeness.
\begin{lemma}
For any positive root $\alpha,$ we always have that
$$
l(s_\alpha)\leq 2\operatorname{ht}(\check{\alpha})-1.
$$
\end{lemma}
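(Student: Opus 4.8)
The plan is to compute $l(s_\alpha)$ as the number of positive roots that $s_\alpha$ makes negative, and then to control this count by summing those inversions. Write $I_\alpha := \{\beta\in R^+ : s_\alpha\beta\notin R^+\}$, so that $l(s_\alpha)=|I_\alpha|$. First I would record two facts about this set. On one hand, the map $\beta\mapsto -s_\alpha\beta$ is an involution of $I_\alpha$: if $\beta\in I_\alpha$ then $-s_\alpha\beta$ is again a positive root, and $s_\alpha$ sends it to $-\beta$, which is negative. Its only fixed point is $\alpha$ itself, since $-s_\alpha\beta=\beta$ forces $\beta\in\R\alpha$ and the root system is reduced. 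Hence $|I_\alpha|$ is odd and $I_\alpha\setminus\{\alpha\}$ splits into $\tfrac12(l(s_\alpha)-1)$ two-element orbits $\{\beta,\beta'\}$ with $\beta'=\langle\beta,\check\alpha\rangle\,\alpha-\beta$, so that $\beta+\beta'=\langle\beta,\check\alpha\rangle\,\alpha$.

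On the other hand, I would identify the total sum of the inversions. Let $\varrho:=\tfrac12\sum_{\beta\in R^+}\beta$ denote the half-sum of the positive roots, which satisfies $\langle\varrho,\check\gamma\rangle=1$ for every simple root $\gamma$ and therefore $\langle\varrho,\check\alpha\rangle=\operatorname{ht}(\check\alpha)$. A short computation with the action of $s_\alpha$ on $\varrho$ — splitting $R^+$ into $I_\alpha$ and its complement and using that $s_\alpha$ interchanges $I_\alpha$ with $-I_\alpha$ — gives the identity
\[
\sum_{\beta\in I_\alpha}\beta \;=\; \varrho-s_\alpha\varrho \;=\; \langle\varrho,\check\alpha\rangle\,\alpha \;=\; \operatorname{ht}(\check\alpha)\,\alpha .
\]

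Finally I would combine the two descriptions of $\sum_{\beta\in I_\alpha}\beta$. Grouping the inversions as the fixed point $\alpha$ together with the orbit pairs gives
\[
\operatorname{ht}(\check\alpha)\,\alpha \;=\; \alpha+\sum_{\text{pairs}}\langle\beta,\check\alpha\rangle\,\alpha ,
\]
and comparing the coefficient of $\alpha$ yields $\operatorname{ht}(\check\alpha)=1+\sum_{\text{pairs}}\langle\beta,\check\alpha\rangle$. Each Cartan integer $\langle\beta,\check\alpha\rangle$ is a positive integer, because $\beta+\beta'$ is a sum of positive roots equal to a scalar multiple of the positive root $\alpha$; hence every term is at least $1$. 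With $\tfrac12(l(s_\alpha)-1)$ pairs this gives $\operatorname{ht}(\check\alpha)\geq 1+\tfrac12(l(s_\alpha)-1)$, which rearranges to the claimed bound $l(s_\alpha)\leq 2\operatorname{ht}(\check\alpha)-1$.

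The step I expect to require the most care is the passage $\sum_{\beta\in I_\alpha}\beta=\operatorname{ht}(\check\alpha)\,\alpha$: one must check that the contribution of every inversion is correctly accounted for and that the resulting coefficient is exactly $\langle\varrho,\check\alpha\rangle=\operatorname{ht}(\check\alpha)$. It is precisely here that the coroot height, rather than the root height, enters, and the inequality is strict exactly when some paired Cartan integer $\langle\beta,\check\alpha\rangle$ exceeds $1$.
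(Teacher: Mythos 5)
Your proof is correct, and it is essentially the paper's argument in different bookkeeping: both hinge on the identity $\operatorname{ht}(\check\alpha)=\langle\varrho,\check\alpha\rangle$ for the half-sum $\varrho$ of positive roots (the paper writes it as $\frac{1}{2}\sum_{\gamma\in R^+}\langle\gamma,\check\alpha\rangle$, proved from $\langle\varrho,\check\gamma\rangle=1$ for simple $\gamma$), on discarding the non-inversions of $s_\alpha$ (which $s_\alpha$ permutes), and on the positivity $\langle\gamma,\check\alpha\rangle\geq 1$ for every inversion $\gamma$. The paper sums the Cartan integers over $R^+$ to get $2\operatorname{ht}(\check\alpha)=2+\sum_{\gamma\in I(s_\alpha)\setminus\{\alpha\}}\langle\gamma,\check\alpha\rangle\geq l(s_\alpha)+1$, while you sum the inversion roots themselves via $\varrho-s_\alpha\varrho$ and then pair them under the involution $\beta\mapsto-s_\alpha\beta$; this is the same computation halved, with the pleasant byproduct that $l(s_\alpha)$ is odd.
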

\begin{proof}
First notice that for every simple root $\alpha$
$$
\dfrac{1}{2}\sum_{\gamma\in R^+}\langle \gamma, \check{\alpha}
\rangle=1
$$
This is because
$$
\sum_{\gamma\in R^+-\{\alpha\}}\langle \gamma, \check{\alpha}
\rangle=\sum_{\gamma\in
R^+-\{\alpha\}}\dfrac{\gamma-s_{\alpha}(\gamma)}{\alpha}=0
$$
and $\langle \alpha, \check{\alpha}\rangle=2.$ Thus, for any
positive root $\alpha$
$$
\operatorname{ht}(\check{\alpha})=\dfrac{1}{2}\sum_{\gamma\in
R^+}\langle \gamma, \check{\alpha} \rangle
$$
Now, for $w\in W$ let
$$
I(w):=\{\beta\in R^+: w(\beta)<0\}
$$
be the set of \textit{inversions} of $w.$ The second expression uses
the order $\leq$ defined on $\mathfrak{t}^*$ where $a\leq b$ if and
only if $b-a$ is a nonnegative linear combination of simple roots.
Recall that $l(w)=|I(w)|.$ Since $s_\alpha$ stabilizes the set
$R^+-I(s_\alpha)$
$$
\sum_{\gamma\in R^+-I(s_\alpha)}\langle \gamma, \check{\alpha}
\rangle=\sum_{\gamma\in
R^+-I(s_\alpha)}\dfrac{\gamma-s_{\alpha}(\gamma)}{\alpha}=0
$$
For any root $\gamma\in I(s_\alpha)$ we have
$s_\alpha(\gamma)=\gamma-\langle\gamma, \check{\alpha}\rangle\alpha
<0,$ hence $\langle\gamma, \check{\alpha}\rangle\geq 1$ and
$$
2\operatorname{ht}(\check{\alpha})=\sum_{\gamma\in
R^+}\langle\gamma,\check{\alpha}\rangle=\sum_{\gamma\in
I(s_\alpha)}\langle\gamma,\check{\alpha}\rangle=2+\sum_{\gamma\in
I(s_\alpha)-\{\alpha\}}\langle\gamma,\check{\alpha}\rangle\geq
l(s_\alpha)+1.
$$
\end{proof}

Let $l_T:W\to \Z_{\geq 0}$ denote the word length function defined
on $W$ with respect to the generating set of reflections
$\{s_\alpha\}_{\alpha\in R^+},$ i.e., for $w\in W$ if we write
$w=s_{\alpha_1}\cdot\ldots\cdot s_{\alpha_r}$ for some positive
roots $\alpha_1, \cdots, \alpha_r$ and $r$ is minimal, then
$r=l_T(w).$  For $w\in W,$ we call $l_T(w)$ the \textit{absolute
length} of $w.$

The following statement is a consequence of the previous Lemma and
Theorem \ref{postnikov}. It can be used to determine when a
decomposition of $w_0$ into a product of reflections gives rise to a
shortest path joining $e$ and $w_0.$ We provide in the statement the
corresponding upper bound for the Hofer-Zehnder capacity.

\begin{teor}\label{Ca3}
Let $w_0$ be the longest element in the Weyl group $W$ with respect
to the system of simple roots $S.$ If there exist positive roots
$\alpha_1, \cdots, \alpha_r$ such that $w_0=s_{\alpha_1}\cdot \ldots
\cdot s_{\alpha_r}$ with $r=l_T(w_0)$ and
$$
\sum_{i=1}^r(2\operatorname{ht}(\check{\alpha}_i)-1)=l(w_0)=|R^+|,
$$
then
$$
d_{\min}(w_0, e)=\sum_{i=1}^r \check{\alpha_i}
$$
In addition, if $\lambda$ is in the interior of the Weyl chamber
relative to the set of simple roots $S,$
$$
\min_d\, \w_\lambda(d)=\sum_{i=1}^r \langle \lambda,
\check{\alpha_i} \rangle
$$
where the minimum is taken over all degrees of \textit{paths}
joining $e$ with $w_0$ in the standard Bruhat graph of $W.$ In
particular we obtain the following upper bound for the Hofer-Zehnder
capacity of $(\mathcal{O}_\lambda, \w_\lambda)$
$$
c_{\operatorname{HZ}}(\mathcal{O}_\lambda, \w_\lambda) \leq
\sum_{i=1}^r \langle \lambda, \check{\alpha_i} \rangle
$$
\end{teor}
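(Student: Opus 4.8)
The plan is to establish the three assertions in turn, the first being the crux. To identify $d_{\min}(w_0,e)$ I read the given factorization backwards and verify that it is a \emph{shortest} directed path in the quantum Bruhat graph from $w_0$ to $e$; Theorem \ref{postnikov} then pins down its degree. Concretely, set $v_0=w_0$ and $v_k=v_{k-1}s_{\alpha_{r-k+1}}$, so that $v_k=s_{\alpha_1}\cdots s_{\alpha_{r-k}}$ and $v_r=e$. Provided this sequence lies in the quantum Bruhat graph, its degree will be $\sum_{i=1}^r\check\alpha_i$ and its length will be $r$.

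The key step is to show that every one of these $r$ steps is a quantum (length-decreasing) edge. For this I would combine the subadditivity of the word length, $|l(ws_\alpha)-l(w)|\le l(s_\alpha)$, with the preceding Lemma $l(s_\alpha)\le 2\operatorname{ht}(\check\alpha)-1$, to get the pointwise bound $l(v_k)\ge l(v_{k-1})+1-2\operatorname{ht}(\check\alpha_{r-k+1})$. Summing over $k$ and using $l(v_0)=l(w_0)=|R^+|$ and $l(v_r)=0$ yields $\sum_{i}\bigl(2\operatorname{ht}(\check\alpha_i)-1\bigr)\ge|R^+|$. The hypothesis says this holds with equality, so each pointwise inequality is an equality, $l(v_k)=l(v_{k-1})+1-2\operatorname{ht}(\check\alpha_{r-k+1})$. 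This is exactly the defining condition of a degree-$\check\alpha_{r-k+1}$ edge of the quantum Bruhat graph, so the path is directed, lies in the graph, and has degree $\sum_i\check\alpha_i$. Finally, any directed path from $w_0$ to $e$ of length $m$ exhibits $w_0$ as a product of $m$ reflections, whence $m\ge l_T(w_0)=r$; our path therefore has minimal length. By Theorem \ref{postnikov} all shortest paths share the degree $d_{\min}(w_0,e)$, giving $d_{\min}(w_0,e)=\sum_i\check\alpha_i$.

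For the second assertion I would obtain the lower bound from Fulton--Woodward and Postnikov and match it with the explicit ``staircase'' path $e,s_{\alpha_1},s_{\alpha_1}s_{\alpha_2},\dots,w_0$ of the Remark, which has degree $\sum_i\check\alpha_i$ and hence $\w_\lambda$-value $\sum_i\langle\lambda,\check\alpha_i\rangle$; this supplies the upper bound. For the lower bound, observe that a path from $e$ to $w_0$ in the standard Bruhat graph, run from $w_0$ down to $e$, is precisely a chain from $w_0$ to $w_0$, since the chain conditions $w_0\prec u_0$ and $u_r\prec\check{w_0}=w_0w_0=e$ force $u_0=w_0$ and $u_r=e$. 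By Theorem \ref{upperbound} a degree-$d$ chain from $w_0$ to $w_0$ produces a nonzero $\GW_{c,3}(\sigma_{w_0},\sigma_{w_0},\sigma_w)$ with $c\le d$, and Theorem \ref{postnikov} forces $c\ge d_{\min}(w_0,\check{w_0})=d_{\min}(w_0,e)=\sum_i\check\alpha_i$. Thus every path degree $d$ satisfies $d\ge\sum_i\check\alpha_i$ in the partial order; since $\lambda$ lies in the interior of the Weyl chamber and $S_P=\emptyset$, pairing with $\lambda$ preserves this inequality and gives $\w_\lambda(d)\ge\sum_i\langle\lambda,\check\alpha_i\rangle$, yielding the equality $\min_d\w_\lambda(d)=\sum_i\langle\lambda,\check\alpha_i\rangle$. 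The third assertion is then immediate, as Theorem \ref{Ca} bounds $c_{\operatorname{HZ}}(\mathcal{O}_\lambda,\w_\lambda)$ by $\min_d\w_\lambda(d)$.

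I expect the main obstacle to be bookkeeping with the orientation and duality conventions---checking that the reversed factorization realizes the quantum edges with the correct coroot degrees, and that a standard-Bruhat path from $e$ to $w_0$ corresponds under $\check v=w_0v$ to the pair $(w_0,e)$ governing $d_{\min}$---rather than any analytic difficulty. The one genuinely substantive point is recognizing that the numerical hypothesis $\sum_i(2\operatorname{ht}(\check\alpha_i)-1)=|R^+|$ is exactly the equality case of the height estimate, which is what upgrades the factorization into a shortest path and lets Postnikov's theorem apply.
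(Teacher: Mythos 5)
Your proposal is correct and follows essentially the same route as the paper: both read the factorization backwards, use the lemma $l(s_\alpha)\leq 2\operatorname{ht}(\check{\alpha})-1$ together with the equality hypothesis to force each step $s_{\alpha_1}\cdots s_{\alpha_i}\to s_{\alpha_1}\cdots s_{\alpha_{i-1}}$ to be a quantum (length-decreasing) edge of degree $\check{\alpha}_i$, invoke the minimality of $r=l_T(w_0)$ to conclude the path is shortest, and then apply Theorem \ref{postnikov} to identify its degree with $d_{\min}(w_0,e)$; your telescoping of the pointwise inequalities is just a cosmetic variant of the paper's prefix--suffix splitting. If anything, your write-up is more complete, since you also spell out the second assertion (lower-bounding every Bruhat-graph path degree via the chain interpretation, Fulton--Woodward, and Postnikov's minimality), which the paper's proof leaves implicit.
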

\begin{proof}
For every $1\leq i < r,$ we have that
\begin{align*}
l(w_0)&\leq l(s_{\alpha_1}\cdot \ldots \cdot
s_{\alpha_{i}})+l(s_{\alpha_{i+1}}\cdot \ldots \cdot s_{\alpha_{r}})
\leq \sum_{j=1}^il(s_{\alpha_j})+\sum_{j=i+1}^rl(s_{\alpha_j})\\
&\leq
\sum_{j=1}^i(2\operatorname{ht}(\check{\alpha}_j)-1)+\sum_{j=i+1}^r(2\operatorname{ht}(\check{\alpha}_j)-1)=l(w_0)
\end{align*}
Thus for every $1\leq i < r,$
$$
l(s_{\alpha_1}\cdot \ldots \cdot
s_{\alpha_{j}})=\sum_{i=1}^j(2\operatorname{ht}(\check{\alpha}_i)-1)
$$
In particular
$$
l(s_{\alpha_1}\cdot \ldots \cdot s_{\alpha_{i}})=l(s_{\alpha_1}\cdot
\ldots \cdot s_{\alpha_{i}}\cdot s_{\alpha_{i+1}})
+1-2\operatorname{ht}(\check{\alpha}_{i+1})
$$
and
$$
w_0=s_1 \cdot s_2\cdot \ldots \cdot s_r \to \cdots \to s_1 \cdot
s_2\to s_1 \to e
$$
represents a directed path in the quantum Bruhat graph from $w_0$ to
$e.$ This path is a shortest path because of the minimality property
of $r=l_T(w_0),$ and hence its degree equals $d_{\min}(w_0, e).$
\end{proof}
\begin{rk}
In section \ref{computation}, we verify that for the longest element
$w_0$ in the Weyl group $W$ there exist positive roots $\alpha_1,
\cdots, \alpha_r$ such that $w_0=s_{\alpha_1}\cdot \ldots \cdot
s_{\alpha_r}$ with $r=l_T(w_0).$ Hence the assumptions made in the
last theorem hold for any compact Lie group.
\end{rk}

\section{Lower bounds for the Hofer-Zehnder capacity and Hamiltonian torus actions}

In this section we describe how to compute lower bounds for the
Hofer-Zehnder capacity of a symplectic manifold with a Hamiltonian
torus action by using its moment map. By simplicity, we always
assume that the points fixed by the torus action are isolated. We
also estimate from below the Hofer-Zenhder capacity of coadjoint
orbits of compact Lie groups as it was already done in the proof of
Theorem \ref{HZUn} for coadjoint orbits of the unitary group.

Let $T$ denote a torus. In what follows we always identify the Lie
algebra $\mathfrak{u}(1)$ with $\R.$ A \textit{weight} of $T$ is a
Lie group morphism $\eta:T\to U(1).$ A \textit{coweight} of $T$ is a
Lie group morphism $\xi:U(1)\to T.$ Let $\Lambda \subset
\mathfrak{t}$ be the kernel of the exponential map $\exp:
\mathfrak{t} \to T$ and $\Lambda^*=\operatorname{Hom}(\Lambda, \Z)$
be its dual. The differential of any weight $\mu:T\to U(1)$ is a Lie
algebra morphism $\mathfrak{t}\to \mathfrak{u}(1)\cong \R$ that
takes $\Lambda$ into $2\pi \Z.$ Conversely, any group morphism
$\Lambda \to 2\pi \Z$ arises in this way. Thus, the set of weights
$X^*(T):=\operatorname{Hom}(T, U(1))$ can be identified with $2\pi
\Lambda^*\subset \mathfrak{t}^*.$ Similarly, the set of coweights
$X_*(T):=\operatorname{Hom}(U(1), T)$ is identified with
$\dfrac{1}{2\pi}\Lambda\subset \mathfrak{t}.$ In this section, we
always see weights and coweights as elements of $\mathfrak{t}^*$ and
$\mathfrak{t},$ respectively. When we pair a coweight $\xi$ with a
weight $\eta,$ we denote the composition $\eta\circ \xi$ by $\langle
\eta, \xi \rangle.$ We always identify $\operatorname{Hom}(U(1),
U(1))$ with $\Z.$

The Schur Lemma implies that for any representation $V $of $T,$ we
can write $V$ as a direct sum
$$
V=\bigoplus_{\eta\in X^*(T)}V_\eta
$$
where $V_\eta=\{v\in V: t\cdot v=\eta(t)v \text{ for all }t\in T
\}.$ We call a $\eta$ such that $V_\eta\ne \{0\}$ a weight of the
representation. Any coweight  $\nu$ of $T$ defines a representation
of $U(1)$ on $V$ by pulling back the action of $T$ on $V$ and the
weights of this representation are the nonzero elements of the set
of integers $\{\langle \eta, \nu \rangle:\eta \in  X^*(T)\}.$

Let $(M, \w)$ be a symplectic manifold with a Hamiltonian action of
a torus $T$ generated by a moment map $\phi:M\to \mathfrak{t}^*.$
The critical points of $\phi$ are the fixed points of the torus
action. In this section, we assume that the number of fixed points
is finite.

For every fixed point $p\in M,$ the \textit{isotropy weights at $p$}
are the weights $\eta_1, \cdots, \eta_n$ such that the tangent space
$T_pM$ is linearly symplectomorphic to the action on $(\C^n,
\w_{\operatorname{st}})$ defined by
$$
t\cdot (z_1, \cdots, z_n):=(\eta_1(t)z_1,\cdots, \eta_n(t)z_n)
$$
and generated by the moment map
\begin{align*}
\C^n &\to\mathfrak{t}^* \\
(z_1, \cdots, z_n) &\mapsto \dfrac{1}{2}(|z_1|^2\eta_1+ \cdots +
|z_n|^2\eta_n ).
\end{align*}
An \textit{isotropy weight of the torus action} is an isotropy
weight of the torus action at some fixed point.

For $\xi \in \mathfrak{t},$ define
\begin{align*}
\phi^{\xi}:M&\to \R \\
p &\mapsto \langle \phi(p), \xi \rangle
\end{align*}
We call $\xi \in \mathfrak{t}$ \textit{generic} if $\langle\eta,
\xi\rangle\ne 0 $ for every isotropy weight $\eta$ of the torus
action. In this case, the function $\phi^\xi:M\to \R$ is Morse and
its set of critical points coincides with the set of points fixed by
the torus action.

Let $\xi$ be a coweight of $T.$ The coweight $\xi$ defines a
Hamiltonian circle action on $M$ with moment map $\phi^\xi:M \to
\R.$ If the isotropy weights of the torus action at a fixed point
are $\eta_1, \cdots, \eta_n,$ the isotropy weights of the circle
action defined by the coweight $\xi$ are the integers $\langle
\eta_1, \xi \rangle, \cdots, \langle \eta_n, \xi\rangle.$ The
coweight $\xi$ is generic if for every fixed point all the integers
$\langle \eta_1, \xi \rangle, \cdots, \langle \eta_n, \xi\rangle$
are nonzero.

\begin{teor}\label{Ca4}
Let $(M^{2n}, \w)$ be a compact symplectic manifold with a
Hamiltonian circle action $S^1$ generated by a moment map $H:M\to
\R.$ Assume that the number of fixed points is finite. Let $I
\subset \Z$ be the set of all isotropy weights of the circle action
and
$$
m^+=\max_{m\in I} |m|.
$$
Then the function
$$
H'=\frac{1}{m^+}H:M\to \R
$$
is slow and in particular
$$
\operatorname{osc}{H'}\leq c_{\operatorname{HZ}}(M, \w)
$$
\end{teor}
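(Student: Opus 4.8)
The plan is to reduce the slowness of $H'$ to a single arithmetic estimate on the orders of stabilizer subgroups, and then to prove that estimate by restricting the action to the fixed locus of a stabilizer and reading off the weights at a fixed point lying inside it. First I would record how the flow of $H'$ is related to the given action. Since $H$ is a moment map, its Hamiltonian flow is the $S^1$-action itself (up to reparametrization of the circle), so every non-constant orbit is a circle; with the normalization under which the moment map generates a flow of period one, a point $x$ with stabilizer $\operatorname{Stab}(x)=\mathbb{Z}/k$ (a proper, hence finite, closed subgroup of $S^1$) lies on an orbit of minimal period $1/k$, exactly as in the computation in the proof of Theorem~\ref{HZUn}. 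Because $X_{H'}=\frac{1}{m^+}X_H$, the flow of $H'$ is that of $H$ reparametrized by $t\mapsto t/m^+$, so the corresponding $H'$-orbit has minimal period $m^+/k$. Hence $H'$ is slow if and only if $k\leq m^+$ for every stabilizer order $k$ that occurs, and the whole problem reduces to this group-theoretic inequality.

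Next I would prove $k\leq m^+$. Choose a point $x$ whose stabilizer is exactly $\mathbb{Z}/k$ and let $Z$ be the connected component through $x$ of the fixed-point set $M^{\mathbb{Z}/k}$. As a component of the fixed locus of the finite subgroup $\mathbb{Z}/k$ of a symplectic action, $Z$ is a compact $S^1$-invariant symplectic submanifold. The $S^1$-action on $Z$ factors through $S^1/(\mathbb{Z}/k)\cong S^1$, and the induced action is effective: the subgroup of $S^1$ acting trivially on all of $Z$ contains $\mathbb{Z}/k$ (since $Z\subseteq M^{\mathbb{Z}/k}$) and is contained in $\operatorname{Stab}(x)=\mathbb{Z}/k$, so it equals $\mathbb{Z}/k$. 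Restricting the moment map $H$ to $Z$ and taking its maximum, which exists by compactness, produces a critical point $p\in Z$; this $p$ is fixed by $S^1/(\mathbb{Z}/k)$, hence by all of $S^1$, so $p\in M^{S^1}$ is one of the given isolated fixed points.

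Then I would read off the weights at $p$. Decompose $T_pM$ into $S^1$-weight spaces with weights $m_1,\dots,m_n$. The subspace $T_pZ$ is precisely the sum of those weight spaces on which $\mathbb{Z}/k$ acts trivially, i.e. with $k\mid m_j$, and on such a direction the induced $S^1/(\mathbb{Z}/k)$-weight is $\bar m_j=m_j/k$. Since $\dim Z\geq 2$ the space $T_pZ$ is nonzero, and because the descended action is effective and $Z$ is connected, the subgroup of $S^1/(\mathbb{Z}/k)$ fixing a neighborhood of $p$ in $Z$ is trivial; as that subgroup is cyclic of order $\gcd_j|\bar m_j|$, this forces $\gcd_j|\bar m_j|=1$. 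In particular some $\bar m_j\neq 0$, whence $|m_j|=k|\bar m_j|\geq k$ and therefore $m^+\geq|m_j|\geq k$, which is the desired estimate.

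Combining the two steps, every non-constant orbit of $H'$ has minimal period $m^+/k\geq 1$, so $H'$ is slow; the bound $\operatorname{osc}(H')\leq c_{\operatorname{HZ}}(M,\omega)$ is then immediate from the definition of the Hofer--Zehnder capacity as the supremum of oscillations over slow Hamiltonians. I expect the main obstacle to be the middle step, and specifically the passage to $Z$: one must verify that $Z$ genuinely is a symplectic submanifold on which the quotient circle acts effectively, and that the conclusion $\gcd_j|\bar m_j|=1$ at $p$ legitimately follows from effectiveness together with connectedness of $Z$ rather than being assumed. Once a fixed point $p\in Z$ carrying a weight of absolute value at least $k$ is produced, the rest is bookkeeping.
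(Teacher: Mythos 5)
Your proof is correct, and it follows the paper's overall skeleton — reduce slowness of $H'$ to the claim that every finite stabilizer order $k$ occurring in $M$ satisfies $k \leq m^+$ — but it justifies that claim by a genuinely different mechanism. The paper works in an equivariant Darboux chart around a fixed point, where stabilizers of nonzero points are cyclic of order $\gcd\{m_i : z_i \neq 0\} \leq m^+$, and then closes the gap between ``points in Darboux charts'' and ``arbitrary points of $M$'' by citing Lemma 3.3.2 of Guillemin--Lerman--Sternberg \cite{sympf}, which asserts that every stabilizer occurring anywhere in $M$ already occurs in such a chart. You replace that citation with a self-contained argument: pass to the connected component $Z$ of $M^{\mathbb{Z}/k}$ through the given point $x$, observe that it is a compact $S^1$-invariant symplectic submanifold on which the quotient circle acts effectively, produce a genuine $S^1$-fixed point $p \in Z$ by maximizing the moment map $H|_Z$, and note that the weights of $T_pZ \subseteq T_pM$ are nonzero multiples of $k$, whence $m^+ \geq k$. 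What your route buys is independence from the GLS structure theorem, at the price of invoking the (standard) facts that fixed-point sets of finite groups of symplectomorphisms are symplectic submanifolds and that critical points of the restricted moment map are fixed points; the paper's route is shorter by citation. One small simplification: your effectiveness/gcd step at $p$ is not actually needed, since the fixed points of the circle action on $M$ are isolated, so every weight $m_j$ at $p$ is automatically nonzero; therefore $T_pZ \neq \{0\}$ (which holds because $\dim Z \geq 2$, as $Z$ contains the circle orbit of $x$) already produces a nonzero weight divisible by $k$.
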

\begin{proof}
We show that the stabilizer subgroup of $S^1$ at every non fixed
point is a cyclic subgroup of order less or equal to $m^+.$ This
claim implies the theorem.

Let $p$ be a fixed point of $S^1$ and $m_1, \cdots, m_n$ be the
isotropy weights at $p.$ The equivariant Darboux theorem asserts
that there is a neighbourhood $U$ of $p$ in $M$ equivariantly
symplectomorphic to a neighborhood $V$ of the origin in $(\C^n,
\w_{\operatorname{st}})$ with the circle action defined by
$$
t\cdot (z_1, \cdots, z_n)=(t^{m_1}z_1,\cdots, t^{m_n}z_n)
$$
The stabilizer subgroup of $(z_1, \cdots, z_n)\in V \backslash\{0\}$
is a cyclic group of order equal to $\gcd\{m_i:z_i\ne 0\}.$ Note
that $\gcd\{m_i:z_i\ne 0\}$ is less or equal to $m^+,$ and the claim
holds at any point of the Darboux chart.

Finally, the stabilizer group of a point located anywhere in the
manifold coincides with the stabilizer group of some point located
at some equivariant Darboux's chart of some fixed point (see e.g.
Guillemin, Lerman and Sternberg \cite{sympf}[Lemma 3.3.2]). The
statement follows from the analysis  done in the previous
paragraphs.
\end{proof}

\begin{corol}\label{cor}
Let $(M^{2n}, \w)$ be a compact symplectic manifold with a
Hamiltonian torus action $T$ generated by a moment map $H:M\to
\mathfrak{t}^*.$ Let us assume that the set of fixed points of $T$
are isolated. Let $I\subset X^*(T)$ be the set of isotropy weights
of the torus action. For a coweight $\xi \in X_*(T),$ define
$$
m_\xi^+:=\max_{\eta\in I}|\langle \eta, \xi\rangle |
$$
Then
$$
\sup\Bigl\{\dfrac{1}{m_\xi^+}
\operatorname{osc}(\phi^{\xi}):\text{generic $\xi\in X_*(T)$}\Bigr\}
\leq c_{\operatorname{HZ}}(M, \w)
$$
\end{corol}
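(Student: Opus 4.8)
The plan is to deduce this directly from Theorem \ref{Ca4} by specializing the torus action to a circle action along each generic coweight. First I would fix a generic coweight $\xi \in X_*(T)$ and consider the Hamiltonian circle action it defines, whose moment map is precisely $\phi^{\xi}:M\to \R$. The essential observation, already recorded in the discussion preceding the statement, is that the isotropy weights of this circle action at a $T$-fixed point $p$ with torus isotropy weights $\eta_1, \cdots, \eta_n$ are the integers $\langle \eta_1, \xi\rangle, \cdots, \langle \eta_n, \xi\rangle$. Hence the full set of isotropy weights of the circle action is $\{\langle \eta, \xi\rangle : \eta \in I\}$ and its maximal absolute value equals exactly $m_\xi^+$.

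Next I would verify that the hypotheses of Theorem \ref{Ca4} are met for this circle action. Since $\xi$ is generic, every integer $\langle \eta, \xi\rangle$ with $\eta \in I$ is nonzero, so $\phi^{\xi}$ is a Morse function whose critical set coincides with the fixed-point set of $T$; as the latter is assumed isolated, and $M$ is compact, the circle action has finitely many fixed points. Thus Theorem \ref{Ca4} applies with $H=\phi^{\xi}$ and $m^+=m_\xi^+$, giving that $\tfrac{1}{m_\xi^+}\phi^{\xi}$ is slow and therefore
$$
\dfrac{1}{m_\xi^+}\operatorname{osc}(\phi^{\xi})=\operatorname{osc}\Bigl(\dfrac{1}{m_\xi^+}\phi^{\xi}\Bigr)\leq c_{\operatorname{HZ}}(M, \w).
$$

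Finally, since this inequality holds for every generic coweight $\xi \in X_*(T)$, I would take the supremum over all such $\xi$ to obtain the claimed bound. The argument is essentially a bookkeeping reduction to Theorem \ref{Ca4}, so I do not anticipate a genuine obstacle; the only point requiring care is confirming that the fixed-point set of the circle action defined by a generic $\xi$ does not acquire extra components beyond the $T$-fixed points, which is guaranteed by the Morse property of $\phi^{\xi}$ established earlier. Consequently no new transversality or dynamical input is needed beyond what Theorem \ref{Ca4} already supplies.
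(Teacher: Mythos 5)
Your proposal is correct and is exactly the argument the paper intends: the corollary is stated without proof as an immediate consequence of Theorem \ref{Ca4}, applied to the circle action generated by each generic coweight $\xi$, using the fact (recorded just before the statement) that the isotropy weights of that circle action are $\{\langle \eta, \xi\rangle : \eta \in I\}$ and that genericity makes its fixed set equal the finite $T$-fixed set. Your care in checking that the circle's fixed-point set does not exceed the $T$-fixed set is the only nontrivial point, and you handle it the same way the paper does, via the Morse property of $\phi^{\xi}$.
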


We want to apply the previous Corollary to bound from below the
Hofer-Zehnder capacity of coadjoint orbits of compact Lie groups. We
use the same convention as in Section \ref{coadjointorbits}. Let $G$
be a compact \textit{simple} Lie group, $T\subset G$ be a maximal
torus and $W$ be the corresponding Weyl group. Let $R$ be the
corresponding system of roots relative to $T$ and $S$ be a choice of
simple roots. For any positive root $\alpha,$ we write
$$
\alpha=\sum_{\beta\in S}n_{\alpha\beta}\beta,
$$
for some nonnegative integers $n_{\alpha\beta}.$ We identify
$\mathfrak{g}$ and $\mathfrak{g}^*$ via an adjoint invariant inner
product $(\cdot\, , \cdot).$ Let $\lambda\in \mathfrak{t}^*_+$ be an
element of the Weyl chamber relative to $S$ and
$\mathcal{O}_\lambda$ be the coadjoint orbit passing through
$\lambda$. The maximal group $T$ acts hamiltonially on
$\mathcal{O}_\lambda $ with moment map $\phi:\mathcal{O}_\lambda
\hookrightarrow \mathfrak{t}^*$ equals to the composition of the
projection map $\mathfrak{g}^*\to \mathfrak{t}^*$ with the inclusion
map $\mathcal{O}_\lambda \hookrightarrow \mathfrak{g}^*.$ The image
of $\phi$ is the convex hull of $\{w(\lambda)\}_{w\in W}.$ The set
of all isotropy weights of the torus action of $T$ on
$\mathcal{O}_\lambda$ is a subset of the set of roots and equal to
the whole set of roots when $\mathcal{O}_\lambda$ is a regular
coadjoint orbit.

We say that $\alpha \leq \beta$ for two positive roots $\alpha$ and
$\beta,$ if $\beta-\alpha$ is a nonnegative linear combination of
simple roots. The \textit{highest positive root} is the positive
root that is maximal with respect to the order that we define for
positive roots. The existence and uniqueness of the highest positive
root follows from the fact that $G$ is simple.

In the next statement we give our lower bound for the Hofer-Zehnder
capacity of coadjoint orbits. In the proof, we keep the notation of
Corollary \ref{cor}.

\begin{teor}\label{Ca2}
Let $\rho$ be the \textit{highest positive root}. Assume that the
longest element $w_0$ in $W$ relative to $S$ can be decomposed as
$$
w_0=s_{\alpha_1}\cdot \ldots \cdot s_{\alpha_r}
$$
where $\alpha_1, \cdots, \alpha_r$ are pairwise orthogonal positive
roots. Then,
\[
\max_{\alpha \in S}\Bigl\{\sum_{k=1}^r
\dfrac{n_{\alpha_k\,\alpha}}{n_{\rho\,\alpha}} \langle \lambda,
\check{\alpha}_k \rangle \Bigr\} \leq
c_{\operatorname{HZ}}(\mathcal{O}_\lambda, \w_\lambda)
\]
\end{teor}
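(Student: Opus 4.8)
The plan is to apply Corollary \ref{cor}, which reduces the lower bound to producing, for each simple root $\alpha \in S$, a suitable generic coweight $\xi$ and then estimating $\frac{1}{m_\xi^+}\operatorname{osc}(\phi^\xi)$ from below. The natural candidate for $\xi$ is a coweight built out of the coroots $\check{\alpha}_1, \ldots, \check{\alpha}_r$ appearing in the orthogonal decomposition $w_0 = s_{\alpha_1}\cdots s_{\alpha_r}$. Since the $\alpha_k$ are pairwise orthogonal, the reflections $s_{\alpha_k}$ commute, and $w_0$ acts as $-1$ on the relevant span; in particular $w_0$ sends $\lambda$ to its ``antipode'' in the moment polytope. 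The key geometric fact I would exploit is that $\operatorname{osc}(\phi^\xi)$ is the difference between the maximum and minimum of $\langle \cdot, \xi\rangle$ over the vertices $\{w(\lambda)\}_{w\in W}$ of the moment polytope, and this oscillation is at least $\langle \lambda - w_0\lambda, \xi\rangle = \langle \lambda, \xi - w_0^{-1}\xi\rangle$, attained between the vertices $\lambda$ and $w_0\lambda$.

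First I would fix a simple root $\alpha$ and choose the coweight
\[
\xi_\alpha = \sum_{k=1}^r \frac{n_{\alpha_k\,\alpha}}{n_{\rho\,\alpha}}\,\check{\alpha}_k,
\]
or a suitable integral multiple thereof so that it genuinely lands in $X_*(T)$; the normalization by $n_{\rho\,\alpha}$ is chosen precisely so that the resulting bound on $m_{\xi_\alpha}^+$ comes out to $1$. Next I would compute the oscillation $\operatorname{osc}(\phi^{\xi_\alpha})$. Using that $\phi^{\xi_\alpha}(w(\lambda)) = \langle w(\lambda), \xi_\alpha\rangle$ and that the maximum over $w \in W$ is achieved at $w = e$ (because $\lambda \in \mathfrak{t}^*_+$ and $\xi_\alpha$ is a nonnegative combination of coroots), while the minimum is at $w = w_0$, I get
\[
\operatorname{osc}(\phi^{\xi_\alpha}) \geq \langle \lambda - w_0\lambda, \xi_\alpha\rangle.
\]
Because the $\alpha_k$ are pairwise orthogonal, $w_0$ acts on each $\check{\alpha}_k$ by $-1$ (each $s_{\alpha_j}$ with $j\ne k$ fixes $\check{\alpha}_k$, and $s_{\alpha_k}\check{\alpha}_k = -\check{\alpha}_k$), so $\langle \lambda, w_0^{-1}\xi_\alpha\rangle = -\langle\lambda, \xi_\alpha\rangle$ and $\langle \lambda - w_0\lambda, \xi_\alpha\rangle = 2\langle\lambda, \xi_\alpha\rangle = 2\sum_k \frac{n_{\alpha_k\,\alpha}}{n_{\rho\,\alpha}}\langle\lambda, \check{\alpha}_k\rangle$.

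The main obstacle will be the estimate on $m_{\xi_\alpha}^+ = \max_{\gamma\in R}|\langle \gamma, \xi_\alpha\rangle|$, i.e.\ controlling the largest isotropy weight paired against $\xi_\alpha$. I would bound $|\langle \gamma, \xi_\alpha\rangle| = \frac{1}{n_{\rho\,\alpha}}\bigl|\sum_k n_{\alpha_k\,\alpha}\langle\gamma,\check{\alpha}_k\rangle\bigr|$ by expanding $\gamma = \sum_{\beta\in S} n_{\gamma\beta}\beta$ and interchanging, reducing the problem to a comparison of the $\alpha$-coefficients $n_{\gamma\alpha}$ of roots $\gamma$ against $n_{\rho\alpha}$; the point is that $\rho$ is the highest root, so its coefficient $n_{\rho\alpha}$ dominates, and the orthogonality of the $\alpha_k$ prevents cross terms from accumulating, giving $m_{\xi_\alpha}^+ \leq 1$. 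Combining $\operatorname{osc}(\phi^{\xi_\alpha}) \geq 2\langle\lambda,\xi_\alpha\rangle$ with $\frac{1}{m_{\xi_\alpha}^+}\geq 1$ feeds into Corollary \ref{cor} to yield, for each $\alpha$,
\[
\sum_{k=1}^r \frac{n_{\alpha_k\,\alpha}}{n_{\rho\,\alpha}}\langle\lambda,\check{\alpha}_k\rangle \leq c_{\operatorname{HZ}}(\mathcal{O}_\lambda,\w_\lambda),
\]
and taking the maximum over $\alpha\in S$ gives the claim. The delicate bookkeeping is the uniform bound $m_{\xi_\alpha}^+ \leq 1$, which is where the highest-root hypothesis and pairwise orthogonality are genuinely used; a factor-of-$2$ check between the oscillation and the final statement (absent the $\frac12$ that appeared in the $U(n)$ computation) also needs care, but follows once $w_0\lambda = -\lambda$ on the relevant span is established.
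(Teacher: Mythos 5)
Your strategy---feeding an explicit coweight built from the $\check{\alpha}_k$ into Corollary \ref{cor} and computing the oscillation via $w_0\xi_\alpha=-\xi_\alpha$---can be completed, but the pivotal estimate you rely on, $m_{\xi_\alpha}^+\leq 1$, is false, and with it your assembled argument proves a false statement. Since the $\alpha_k$ are pairwise orthogonal, $\langle\alpha_j,\check{\alpha}_k\rangle=2\delta_{jk}$, so $\langle\alpha_j,\xi_\alpha\rangle=2n_{\alpha_j\,\alpha}/n_{\rho\,\alpha}$, which equals $2$ whenever $n_{\alpha_j\,\alpha}=n_{\rho\,\alpha}$; for the decompositions exhibited in Section \ref{computation} the highest root $\rho$ is itself one of the $\alpha_k$ in all classical types, so $m_{\xi_\alpha}^+\geq 2$ there. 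Consequently your two ingredients, $\operatorname{osc}(\phi^{\xi_\alpha})=2\sum_k\frac{n_{\alpha_k\,\alpha}}{n_{\rho\,\alpha}}\langle\lambda,\check{\alpha}_k\rangle$ and $1/m_{\xi_\alpha}^+\geq 1$, would yield the lower bound $2\sum_k\frac{n_{\alpha_k\,\alpha}}{n_{\rho\,\alpha}}\langle\lambda,\check{\alpha}_k\rangle$, twice what Theorem \ref{Ca2} asserts; in type $C_n$ this gives $2(\lambda_1+\cdots+\lambda_n)\leq c_{\operatorname{HZ}}(\mathcal{O}_\lambda,\w_\lambda)$, contradicting the upper bound $\lambda_1+\cdots+\lambda_n$ established in Section \ref{computation}. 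The factor-of-$2$ tension you flagged is resolved not by $w_0\lambda=-\lambda$ but by the fact that $m_{\xi_\alpha}^+$ equals $2$, cancelling the $2$ in the oscillation. The correct inequality $m_{\xi_\alpha}^+\leq 2$ is true, but your ``expand and interchange'' sketch does not prove it; a clean argument is: for any positive root $\gamma$, orthogonality gives $\gamma-w_0\gamma=\sum_k\langle\gamma,\check{\alpha}_k\rangle\alpha_k$, and $\gamma':=-w_0\gamma$ is again a positive root, so comparing coefficients of $\alpha$ yields $\sum_k n_{\alpha_k\,\alpha}\langle\gamma,\check{\alpha}_k\rangle=n_{\gamma\,\alpha}+n_{\gamma'\,\alpha}\leq 2n_{\rho\,\alpha}$, because $\rho$ is the highest root.

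Two further gaps. First, $\xi_\alpha$ need not be generic: in $B_2$ one finds $\xi_{\alpha_1}=2e_1$, which pairs to zero with the root $e_2$, so the associated circle action has non-isolated fixed points and Corollary \ref{cor} (via Theorem \ref{Ca4}) does not apply to $\xi_\alpha$ verbatim; you would need a perturbation-and-continuity step. Second, for comparison, the paper avoids both difficulties by a different route: it applies Corollary \ref{cor} only to \emph{positive} coweights $\xi$, which are automatically generic and satisfy $m_\xi^+=\langle\rho,\xi\rangle$ at once by the highest-root property, and then maximizes the scale-invariant ratio $\operatorname{osc}(\phi^\xi)/\langle\rho,\xi\rangle$ over the closed dominant cone by linear programming; the maximum sits on the extreme rays, spanned by the vectors $\tau_\alpha$ dual to the simple roots, where it evaluates to $\sum_k\frac{n_{\alpha_k\,\alpha}}{n_{\rho\,\alpha}}\langle\lambda,\check{\alpha}_k\rangle$. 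So your single-coweight idea, once repaired as above, would give a genuinely different (optimization-free) proof, but as written the key estimate is off by a factor of two, unproven, and the genericity hypothesis of the corollary is unchecked.
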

\begin{proof}

For $w\in W,$ the weight decomposition of the tangent space of
$\mathcal{O}_\lambda$ at $w(\lambda)$ is
$$
T_{w(\lambda)}\mathcal{O}_\lambda=\bigoplus_{\alpha\in
R^+-R^+_P}\mathfrak{g}_{-w(\alpha)}
$$
The isotropy weights of the circle action defined by a coweight
$\xi$ is the set of integers
$$
\{-\langle \alpha, \xi \rangle :\alpha\in W(R^+-R^+_P)\}.
$$
We call a coweight $\xi$ positive if
$$
\langle \alpha, \xi\rangle > 0
$$
for every positive root $\alpha.$ We denote the set of positive
coweights by $X_*(T)_+.$ Every positive coweight $\xi$ is generic by
definition. We can always assume that a coweight is positive by
taking a different system of simple roots if needed.

For a positive coweight $\xi,$ the Morse index of
$\phi^\xi:\mathcal{O}_\lambda \to \R$ at $\lambda$ is the maximum
possible and $\lambda$ is a local maximum. Indeed, $\lambda$ is an
absolute maximum. Similarly, the absolute minimum of
$\phi^\xi:\mathcal{O}_\lambda \to \R$ is achieved at $w_0(\lambda).$
Note that the value
$$
m_\xi^+=\max_{\alpha \in W(R-R_P)}|\langle  \alpha, \xi \rangle|
$$
is achieved when $\alpha$ is the highest positive root $\rho.$

Our orthogonality assumption implies that
$$
w_0(\lambda)= \prod_{k=1}^r
s_{\alpha_k}(\lambda)=\lambda-\sum_{k=1}^r \langle \lambda,
\check{\alpha}_k \rangle \alpha_k
$$
and for a positive coweight $\xi$
$$
\operatorname{osc}(\phi^\xi)=\sum_{k=1}^r \langle \lambda,
\check{\alpha}_k \rangle \langle \alpha_k, \xi \rangle
$$
Following Corollary \ref{cor}, we want to maximize the expression
$$
\dfrac{1}{\langle \rho, \xi
\rangle}\operatorname{osc}(\phi^\xi)=\sum_{k=1}^r \dfrac{\langle
\alpha_k, \xi \rangle}{\langle \rho, \xi \rangle}\langle \lambda,
\check{\alpha}_k \rangle
$$
for $\xi \in X_*(T)_+.$ The right hand side of the last equation is
scale invariant and continuous as a function of the variable $\xi$
on the convex cone
$$
X_*(T)_+\otimes \R=\{\xi\in \mathfrak{t}\backslash\{0\}:\langle
\alpha, \xi \rangle\geq 0 \text{ for all }\alpha\in R^+\}
$$
Thus,
$$
\sup_{\xi\in X_*( T)_+}\Bigl\{\sum_{k=1}^r \dfrac{\langle \alpha_k,
\xi \rangle}{\langle \rho, \xi \rangle} \langle \lambda,
\check{\alpha}_k \rangle \Bigr\}=\sup_{\xi\in
 X_*(T)_+\otimes\mathbb{R}}\Bigl\{\sum_{k=1}^r \dfrac{\langle \alpha_k, \xi \rangle}{\langle
\rho, \xi \rangle}\langle \lambda, \check{\alpha}_k \rangle \Bigr\}
$$
The change of variable
$$
y:=\dfrac{\xi}{\langle \rho, \xi \rangle}
$$
transform our problem into the following linear optimization problem
$$
\begin{cases}
\text{Maximize } & \sum_{k=1}^r \langle \alpha_k, y \rangle\langle
\lambda, \check{\alpha}_k \rangle
\\
\text{Subject to } & \langle \rho, y \rangle =1 \\ & \langle \alpha,
y \rangle \geq  0 \text{  for all }\alpha \in S
\end{cases}
$$
The hyperplane in $\mathfrak{t}$ defined by the equation $\langle
\rho, y \rangle =1$ cuts $X_*(T)_+\otimes \R$ into the polytope
$$
\triangle=\{y\in \mathfrak{t}: \langle \rho, y \rangle =1,
\,\,\langle \alpha, y \rangle \geq 0 \text{  for all }\alpha \in S
\}.
$$
The maximum value of the linear expression $\sum_{k=1}^r \langle
\alpha_k, y \rangle\langle \lambda, \check{\alpha}_k \rangle$ on
$\triangle$ is obtained at some of the vertices of the polytope
$\triangle.$ Equivalently, the maximum value of the expression
$$
\sum_{k=1}^r  \dfrac{\langle \alpha_k, \xi \rangle}{\langle \rho,
\xi \rangle}\langle \lambda, \check{\alpha}_k \rangle
$$
is obtained at some of the one-dimensional faces of $X_*(T)_+\otimes
\R.$ Each one-dimensional face of $X_*(T)_+\otimes \R$ is spanned by
some element in the basis dual to the basis of simple roots defined
by the relation
$$
(\tau_\alpha, \beta)=\delta_{\alpha, \beta} \text{ for any }\alpha,
\beta\in S.
$$
We conclude that
\[
\sup_{\xi\in X_*(T)_+\otimes \mathbb{R}}\Bigl\{\sum_{k=1}^r
\dfrac{\langle \alpha_k, \xi \rangle}{\langle \rho, \xi
\rangle}\langle \lambda, \check{\alpha}_k \rangle \Bigr\}=
\max_{\alpha \in S}\Bigl\{ \sum_{k=1}^r \dfrac{(\alpha_k,
\tau_\alpha)}{(\rho, \tau_\alpha)}\langle \lambda, \check{\alpha}_k
\rangle  \Bigr\}
\]
By Corollary \ref{cor}, we get that
\[
\max_{\alpha \in S}\Bigl\{ \sum_{k=1}^r \dfrac{(\alpha_k,
\tau_\alpha)}{(\rho, \tau_\alpha)}\langle \lambda, \check{\alpha}_k
\rangle  \Bigr\}=\max_{\alpha \in S}\Bigl\{\sum_{k=1}^r
\dfrac{n_{\alpha_k\,\alpha}}{n_{\rho\,\alpha}}\langle \lambda,
\check{\alpha}_k \rangle\Bigr\} \leq
c_{\operatorname{HZ}}(\mathcal{O}_\lambda, \w_\lambda)
\]
\end{proof}

\begin{rk}
The previous statement is compatible with Theorem \ref{Ca}, i.e.,
with the same notation as in the previous theorem, we have that
\[
\max_{\alpha \in S}\Bigl\{\sum_{k=1}^r
\dfrac{n_{\alpha_k\,\alpha}}{n_{\rho\,\alpha}}\langle \lambda,
\check{\alpha}_k \rangle \Bigr\} \leq \sum_{k=1}^r \langle \lambda,
\check{\alpha}_k \rangle
\]
More generally, let $(M, \w)$ be a symplectic manifold and assume
that $S^1$ acts Hamiltonially on $(M, \w)$ with a moment map $H:M\to
\R.$ Assume that the fixed points of the circle action are isolated.

Let $p_0, p_1, \cdots, p_n$ be a sequence of fixed points such that
every pair $p_i, p_{i+1}$ of critical points in the sequence is
joined by a $S^1$-invariant sphere $S_{i}$ and $p_0, p_n$ are the
minimum and maximum of $H,$ respectively.

\begin{figure}[h!]
\centering
\includegraphics[scale=0.7]{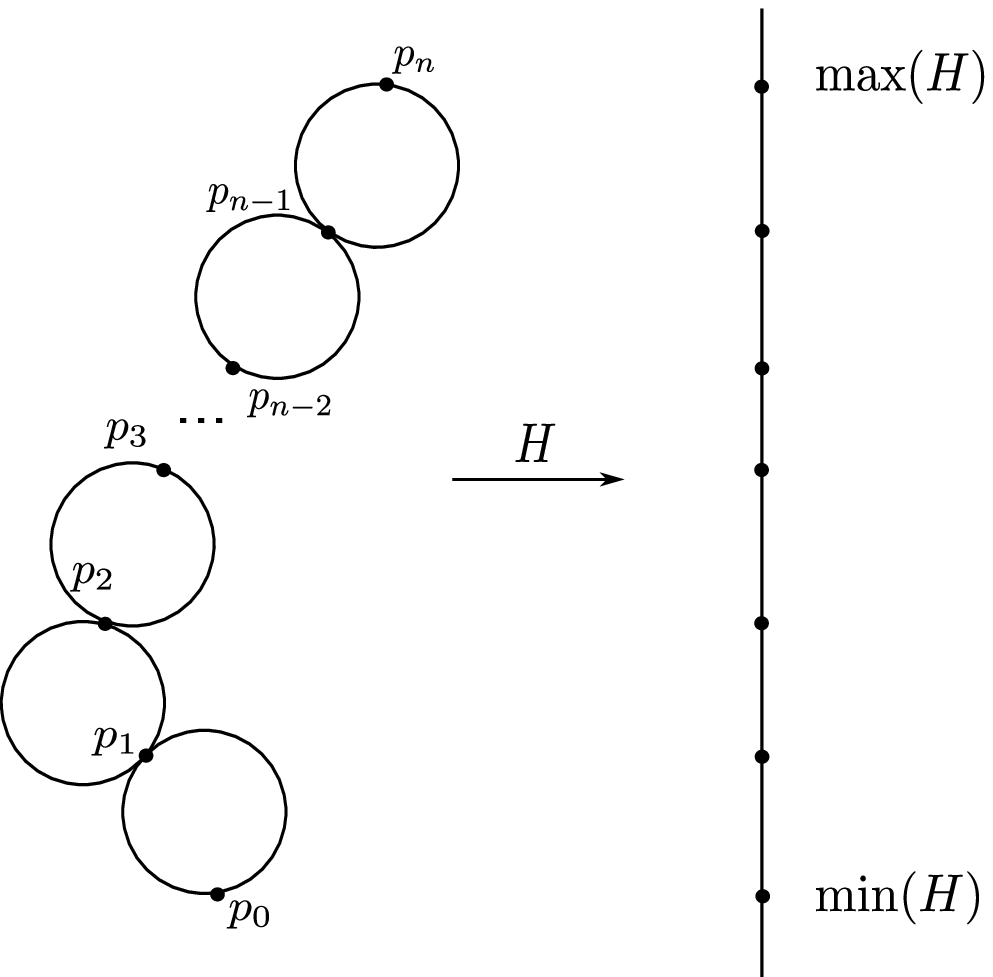}
\end{figure}

If the isotropy weight of the circle action restricted to the sphere
$S_i$  at $p_i$ is $k_i,$ then
$$
|H(p_{i+1})-H(p_i)|=k_i\w(S_i)
$$
(see e.g. McDuff and Tolman \cite{McDuffTolman}[Lemma 3.9]). Thus
\begin{align*}
\operatorname{osc}{H}&=H(p_n)-H(p_0)\leq \sum_i |H(p_{i+1})-H(p_i)|
= \sum_i k_i\w(S_i)\\
&\leq \max_i{k_i}\sum_i\w(S_i)\leq m^+\sum_i\w(S_i),
\end{align*}
and
$$
\frac{1}{m^+}\operatorname{osc}{H} \leq \sum_i\w(S_i),
$$
where $m^+$ is defined as in Theorem \ref{Ca4}. When the symplectic
manifold is a  coadjoint orbit and the circle action comes from a
coweight of a maximal torus, the Hofer-Zehnder capacity of the
coadjoint orbit is between the two values of the last inequality.
\end{rk}

\section{Computation of bounds Hofer-Zehnder
capacity}\label{computation}

In this section we show that the assumptions made in Theorem
\ref{Ca3} and Theorem \ref{Ca2} hold for any Weyl group and we
compute for any compact simple Lie groups the corresponding bounds
for the Hofer-Zehnder capacity of their regular coadjoint orbits.

We use the same convention as in Section \ref{coadjointorbits}. Let
$G$ be a compact \textit{simple} Lie group and $T\subset G$ be a
maximal torus. Let $R$ be the set of roots associated with $T$ and
$S$ be a choice of simple roots. We denote by $W$ the corresponding
Weyl group.

In the following theorem, we summarize the main results of the
paper.
\begin{teor}
Let $w_0$ be the longest element of $W$ relative to the set of
simple roots $S.$ There exist pairwise orthogonal positive roots
$\alpha_1, \cdots, \alpha_r$ such that $l_T(w_0)=r,$
$$
w_0=s_{\alpha_1}\cdot \ldots \cdot s_{\alpha_r}
$$
and
$$
\sum_{i=1}^r(2\operatorname{ht}(\check{\alpha}_i)-1)=l(w_0)=|R^+|
$$
In particular, for regular $\lambda\in \mathfrak{t}^*_+$ we obtain
the following bounds for the Hofer-Zehnder capacity of the coadjoint
orbit  $\mathcal{O}_\lambda$ with respect to its
Kostant-Kirillov-Souriau form $\w_\lambda$
\[
\max_{\alpha \in S}\Bigl\{\sum_{k=1}^r
\dfrac{n_{\alpha_k\,\alpha}}{n_{\rho\,\alpha}}\langle \lambda,
\check{\alpha}_k \rangle  \Bigr\} \leq
c_{\operatorname{HZ}}(\mathcal{O}_\lambda, \w_\lambda) \leq
\sum_{k=1}^r \langle \lambda, \check{\alpha}_k \rangle,
\]
where $\rho$ denotes the highest positive root.
\end{teor}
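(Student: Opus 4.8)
The statement has two layers, and I would separate them at once. The capacity bounds are the cheap layer: as soon as I produce pairwise orthogonal positive roots $\alpha_1,\dots,\alpha_r$ with $w_0=s_{\alpha_1}\cdots s_{\alpha_r}$, with $r=l_T(w_0)$, and with $\sum_{i=1}^r(2\operatorname{ht}(\check{\alpha}_i)-1)=|R^+|$, the upper bound $c_{\operatorname{HZ}}(\mathcal{O}_\lambda,\w_\lambda)\le\sum_k\langle\lambda,\check{\alpha}_k\rangle$ is \emph{verbatim} the conclusion of Theorem \ref{Ca3} (these three properties are precisely its hypotheses), and the lower bound is \emph{verbatim} Theorem \ref{Ca2}, which needs only the pairwise orthogonality. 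Since $\lambda$ is regular and dominant and each $\alpha_k$ is positive, $\langle\lambda,\check{\alpha}_k\rangle>0$, so the absolute values present in Theorem \ref{Ca2} disappear. Thus the entire proof reduces to constructing the roots and checking the three displayed properties; everything else is a citation of the two earlier theorems.

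To build the roots I would use the cascade of orthogonal roots of Kostant. Take $\alpha_1$ to be the highest root of $R$, let $R_1$ be the subsystem of roots orthogonal to $\alpha_1$, take $\alpha_2$ a highest root of $R_1$ (a highest root in each irreducible component when $R_1$ is reducible), and iterate until the orthogonal subsystem is empty. By construction the $\alpha_i$ are pairwise orthogonal, so the $s_{\alpha_i}$ commute and $w:=s_{\alpha_1}\cdots s_{\alpha_r}$ acts as $-\operatorname{id}$ on $V:=\operatorname{span}(\alpha_1,\dots,\alpha_r)$ and as $\operatorname{id}$ on $V^\perp$. Two things must then be checked. First, $w=w_0$: this is the standard identification of the cascade product with the longest element, and I would confirm it in passing during the type-by-type computation below (equivalently, one shows $w$ carries $R^+$ onto $R^-$, whence $l(w)=|R^+|=l(w_0)$). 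Second, $r=l_T(w_0)$: since $\operatorname{Fix}(w_0)=V^\perp$, the elementary bound $l_T(w)\ge\operatorname{codim}\operatorname{Fix}(w)$ gives $l_T(w_0)\ge\dim V=r$, while the decomposition itself gives $l_T(w_0)\le r$, so equality holds.

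The genuine obstacle is the height identity $\sum_{i=1}^r(2\operatorname{ht}(\check{\alpha}_i)-1)=|R^+|$. Here I would first cut it down to a one-sided inequality. The Lemma $l(s_\alpha)\le 2\operatorname{ht}(\check{\alpha})-1$ and the subadditivity of word length give
\[
|R^+|=l(w_0)\le\sum_{i=1}^r l(s_{\alpha_i})\le\sum_{i=1}^r\bigl(2\operatorname{ht}(\check{\alpha}_i)-1\bigr),
\]
so it suffices to prove the reverse inequality $\sum_i(2\operatorname{ht}(\check{\alpha}_i)-1)\le|R^+|$; equality then propagates through the whole chain and, as a bonus, forces $l(s_{\alpha_i})=2\operatorname{ht}(\check{\alpha}_i)-1$ together with the additivity $l(w_0)=\sum_i l(s_{\alpha_i})$, exactly the hypotheses needed by Theorem \ref{Ca3}. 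Invoking the formula $2\operatorname{ht}(\check{\alpha}_i)=\sum_{\gamma\in R^+}\langle\gamma,\check{\alpha}_i\rangle$ from the proof of that Lemma, the desired inequality collapses to the single global estimate $\sum_{\gamma\in R^+}\langle\gamma,\sum_i\check{\alpha}_i\rangle\le|R^+|+r$. I expect this to be the hard step, and it must be handled globally rather than root-by-root: a naive pointwise bound fails, since already in type $C_3$ the non-cascade root $e_1+e_2$ pairs to $2$ with $\sum_i\check{\alpha}_i$. I would prove the estimate from the recursive structure of the cascade where a uniform argument is available, and otherwise verify it directly from the classification, exhibiting the cascade explicitly in each type $A_n,B_n,C_n,D_n,E_6,E_7,E_8,F_4,G_2$ and summing the coroot heights against the simple coroots.

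Once the three properties are secured, the assembly is immediate: Theorem \ref{Ca3} delivers the upper bound, Theorem \ref{Ca2} the lower bound, and the remark following Theorem \ref{Ca2} guarantees the two are compatible. The computation already carried out for $U(n)$ in Theorem \ref{HZUn} is precisely the type $A$ instance of this scheme and serves as the template for the remaining types.
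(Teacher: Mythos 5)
Your proposal is correct, and at bottom it is the paper's own proof: the paper establishes this theorem by exhibiting, type by type, exactly the orthogonal decompositions your cascade produces ($s_{e_1-e_2}s_{e_1+e_2}\cdots$ for $B_n$ and $D_n$, $s_{2e_1}\cdots s_{2e_n}$ for $C_n$, explicit lists of roots for $E_6,E_7,E_8,F_4,G_2$, with type $A$ covered by Theorem \ref{HZUn}), asserting for each the values of $l_T(w_0)$ and $l(w_0)$, and then feeding the three properties into Theorems \ref{Ca3} and \ref{Ca2} exactly as you describe; the paper says outright that the detailed verifications are omitted. The genuine difference is in the scaffolding, where your version is better organized: the paper gives no argument that its lists satisfy the required hypotheses, whereas you get $r=l_T(w_0)$ for free from the bound $l_T(w)\ge\operatorname{codim}\operatorname{Fix}(w)$ once the product is identified with $w_0$, and you get one direction of the height identity for free from the Lemma $l(s_\alpha)\le 2\operatorname{ht}(\check{\alpha})-1$ plus subadditivity of length, so that only the reverse inequality $\sum_{i=1}^r\bigl(2\operatorname{ht}(\check{\alpha}_i)-1\bigr)\le|R^+|$ remains to be checked per type. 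Your caveat that this last step cannot be done root-by-root is also correct (in $C_3$ the non-cascade root $e_1+e_2$ does pair to $2$ with $e_1+e_2+e_3$, and the roots $e_i-e_j$ pairing to $0$ must compensate in the sum), and, like the paper, you ultimately defer this verification to the classification — so neither argument is more complete than the other, but yours isolates precisely what must be computed in each type and why the rest is automatic. One nitpick: the absolute values you refer to appear in the introduction's version of the statement, not in Theorem \ref{Ca2} itself, which, as you otherwise correctly note, assumes only pairwise orthogonality and already produces the bound without them.
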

We split the proof of the previous statement in several cases
according to the type of the Lie group $G.$ We provide the described
decomposition of $w_0$ and the corresponding lower and upper bound
for the Hofer-Zehnder capacity of the regular coadjoint orbit
$\mathcal{O}_\lambda.$ We omit the detailed calculations, although
we give enough information so they can be verified by the reader.

Let $\lambda$ be in the interior of the Weyl chamber relative to
$S,$ $\mathcal{O}_\lambda$ be the coadjoint orbit passing through
$\lambda$ and $\w_\lambda$ be the Kostant-Kirillov-Souriau form
defined on $\mathcal{O}_\lambda.$

\subsection*{Type $B$}

The standard root system for the group $B_n=SO(2n+1)$ is identified
with the set of vectors $R=\{\pm e_i,\, \pm(e_j\pm e_k):\, j\ne
k\}_{1\leq i, j \leq n}\subset \R^n$ with a choice of simple roots
given by $S=\{\alpha_1=e_1-e_2, \cdots, \alpha_{n-1}=e_{n-1}-e_n,
\alpha_n=e_n\}.$ The Dynkin diagram of $B_n$ is shown in Figure
\ref{B}

\begin{figure}[h!]
\centering
\includegraphics[scale=0.7]{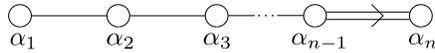}
\caption{Dynkin diagram of $B_n$}\label{B}
\end{figure}

The longest element $w_0$ of $B_n$ seen as a map of $\R^n$ is the
reflection
\begin{align*}
\R^n &\to \R^n \\
(x_1, \cdots, x_n)&\mapsto (-x_1, \cdots, -x_n)
\end{align*}
We have that $l_T(w_0)=n, l(w_0)=n^2$ and
$$
w_0=\begin{cases} s_{e_1-e_2}s_{e_1+e_2}
s_{e_3-e_4}s_{e_3+e_4}\ldots \cdot
s_{e_{n-1}-e_n}s_{e_{n-1}+e_n} &\text{ if $n$ is even}\\
s_{e_1-e_2}s_{e_1+e_2}s_{e_3-e_4}s_{e_3+e_4}\cdot \ldots \cdot
s_{e_{n-2}-e_{n-1}}s_{e_{n-2}+e_{n-1}}s_{e_n} &\text{ if $n$ is odd}
\end{cases}
$$
Hence
$$
c_{\operatorname{HZ}}(\mathcal{O}_\lambda, \w_\lambda) \leq
\begin{cases}
2\lambda_1+2\lambda_3+\cdots+2\lambda_{n-1} &\text{ if $n$ is
even}\\
2\lambda_1+2\lambda_3+\cdots+2\lambda_{n-2}+2\lambda_{n} &\text{ if
$n$ is odd}\\
\end{cases}
$$
The highest root $\rho$ is $e_1+e_2$ and
$$
c_{\operatorname{HZ}}(\mathcal{O}_\lambda, \w_\lambda) \geq
\max\{2\lambda_1, \lambda_1+\lambda_2+\cdots+\lambda_n\}
$$

\subsection*{Type $C$}

The standard root system for the group $C_n=Sp(n)$ is identified
with the set of vectors $R=\{\pm 2e_i,\, \pm(e_j\pm e_k):\, j\ne
k\}_{1\leq i, j \leq n}\subset \R^n$ with a choice of simple roots
given by $S=\{\alpha_1=e_1-e_2, \cdots, \alpha_{n-1}=e_{n-1}-e_n,
\alpha_n=2e_n\}.$ The Dynkin diagram of $C_n$ is shown in Figure
\ref{C}

\begin{figure}[h!]
\centering
\includegraphics[scale=0.7]{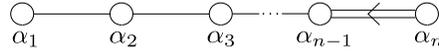}
\caption{Dynkin diagram of $C_n$}\label{C}
\end{figure}

Combinatorially speaking, the Weyl group $C_n$ is the same as the
Weyl group  $B_n,$ however the edges of its Bruhat graphs have
different degrees.

As an automorphism of $\R^n,$ the longest element $w_0$ of $C_n$ is
the reflection
\begin{align*}
\R^n &\to \R^n \\
(x_1, \cdots, x_n)&\mapsto (-x_1, \cdots, -x_n)
\end{align*}
We have $l_T(w_0)=n, l(w_0)=n^2$ and
$$
w_0=s_{2e_1}s_{2e_2}\cdot \ldots \cdot s_{2e_n}
$$
Hence,
$$
c_{\operatorname{HZ}}(\mathcal{O}_\lambda, \w_\lambda) \leq
\lambda_1+\lambda_2+\cdots+\lambda_n
$$
The longest root is $\rho=2e_1$ and
$$c_{\operatorname{HZ}}(\mathcal{O}_\lambda, \w_\lambda) \geq \lambda_1+\lambda_2+\cdots+\lambda_n
$$
Hence, we get the sharp expression for coadjoint orbits of type $C$
$$
c_{\operatorname{HZ}}(\mathcal{O}_\lambda, \w_\lambda)=
\lambda_1+\lambda_2+\cdots+\lambda_n
$$

\subsection*{Type $D$}

The standard root system for the group $D_n=SO(2n)$ is identified
with the set of vectors $R=\{\pm(e_j\pm e_k):\, j\ne k\}_{1\leq i, j
\leq n}\subset \R^n$ with a choice of simple roots given by
$S=\{\alpha_1=e_1-e_2, \cdots, \alpha_{n-1}=e_{n-1}-e_n,
\alpha_n=e_{n-1}+e_n\}.$ The Dynkin diagram of $D_n$ is shown in
Figure \ref{D}.

\begin{figure}[h!]
\centering
\includegraphics[scale=0.7]{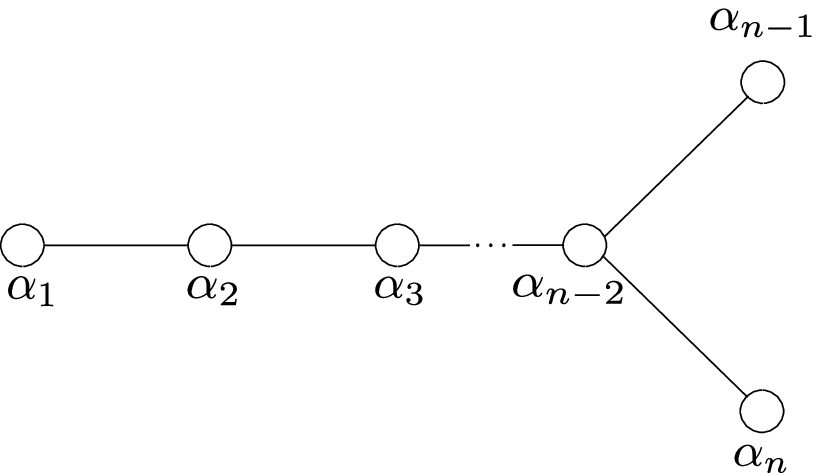}
\caption{Dynkin diagram of $D_n$}\label{D}
\end{figure}

As a map of $\R^n,$ the longest element $w_0$ is the application
\begin{align*}
\R^n &\to \R^n \\
(x_1, \cdots, x_{n-1}, x_n)&\mapsto
\begin{cases}
(-x_1, \cdots, -x_{n-1}, -x_n) & \text{if $n$ is
even}\\
(-x_1, \cdots, -x_{n-1}, x_n) & \text{if $n$ is odd}
\end{cases}
\end{align*}
We have that
$$
l_T(w_0)=\begin{cases} n & \text{if $n$ is even}\\
n-1 & \text{if $n$ is odd}
\end{cases},
$$
$l(w_0)=n(n-1),$ and
$$
w_0=\begin{cases} s_{e_1-e_2}s_{e_1+e_2}s_{e_3-e_4}s_{e_3+e_4}\cdot
\ldots \cdot
s_{e_{n-1}-e_n}s_{e_{n-1}+e_n} &\text{ if $n$ is even}\\
s_{e_1-e_2}s_{e_1+e_2}s_{e_3-e_4}s_{e_3+e_4}\cdot \ldots \cdot
s_{e_{n-2}-e_{n-1}}s_{e_{n-2}+e_{n-1}} &\text{ if $n$ is odd}
\end{cases}
$$
Hence,
$$
c_{\operatorname{HZ}}(\mathcal{O}_\lambda, \w_\lambda)\leq
\begin{cases}2\lambda_1+ 2\lambda_3+ \ldots +2\lambda_{n-1} &\text{ if $n$ is even}\\
2\lambda_1+ 2\lambda_3+\ldots +2\lambda_{n-2}  &\text{ if $n$ is
odd}
\end{cases}
$$
On the other hand, $\rho=e_1+e_2$ and
$$
c_{\operatorname{HZ}}(\mathcal{O}_\lambda, \w_\lambda)\geq
\begin{cases}\max\{2\lambda_1, \lambda_1+\lambda_2+\cdots+\lambda_{n-1}+|\lambda_{n}|\}  &\text{ if $n$ is even}
\\ \max\{2\lambda_1, \lambda_1+\lambda_2+\cdots+\lambda_{n-1}\}
 &\text{ if $n$ is odd}
\end{cases}
$$

\subsection*{Type $E$}

There are three isomorphism classes of compact simple Lie groups of
type $E: E_6, E_7, E_8.$ We start first with $E_8.$ A system of
simple roots for $E_8$ as vectors in $\R^8$ is
\begin{align*}
S=\{&\alpha_1=\dfrac{1}{2}(e_1-e_2-e_3-e_4-e_5-e_6-e_7+e_8),
\alpha_2=e_1+e_2, \alpha_3=-e_1+e_2, \\ &\alpha_4=-e_2+e_3,
\alpha_5=-e_3+e_4, \alpha_6=-e_4+e_5, \alpha_7=-e_5+e_6,
\alpha_8=-e_6+e_7\}
\end{align*}
The Dynkin diagram of $E_8$ is shown in Figure \ref{E}.

\begin{figure}[h!]
\centering
\includegraphics[scale=0.7]{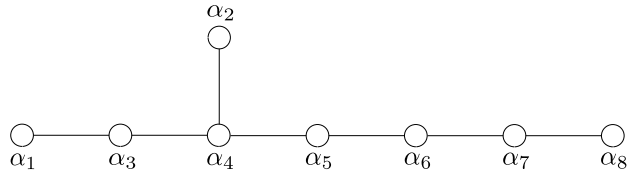}
\caption{Dynkin diagram of $E_8$}\label{E}
\end{figure}

As a map of $\R^8,$ the longest element of $E_8$ is the application
\begin{align*}
\R^8 &\to \R^8 \\
(x_1, \cdots, x_8) &\mapsto (-x_1, \cdots, -x_8)
\end{align*}
and its absolute length and length are equal to 8 and 120,
respectively. We can write the longest element as the composition of
reflections $s_{r_1}, \ldots s_{r_7}$ and $s_{r_8}$ where
\begin{align*}
&r_1=-e_1+e_2,\, r_2=e_1+e_2,\, r_3=-e_3+e_4,\, r_4=e_3+e_4,\\
&r_5=-e_5+e_6,\, r_6=e_5+e_6,\, r_7=-e_7+e_8,\, r_8=e_7+e_8
\end{align*}
The upper bound for the Hofer-Zehnder capacity of a regular
coadjoint orbit $(\mathcal{O}_\lambda, \w_\lambda)$ of $E_8$ is
given by
$$
c_{\operatorname{HZ}}(\mathcal{O}_\lambda, \w_\lambda) \leq
2\lambda_2+2\lambda_4+2\lambda_6+2\lambda_8
$$
The highest root equals to $e_7+e_8$ and
$$
c_{\operatorname{HZ}}(\mathcal{O}_\lambda, \w_\lambda) \geq
\max\Bigl\{2\lambda_8,
\dfrac{1}{3}(\lambda_1+\lambda_2+\cdots+\lambda_7+5\lambda_8)\Bigr\}
$$
We have finished our analysis for $E_8$ and now we continue with the
one for $E_7.$ We keep the notation used in the previous paragraphs.
A system of simple roots for $E_7$ is the set $\{\alpha_1, \alpha_2,
\cdots, \alpha_7\}.$ Note that the Dynkin diagram of $E_7$ is
contained in the Dynkin diagram of $E_8.$ The longest element of
$E_7$ is the application
\begin{align*}
\R^8 &\to \R^8 \\
(x_1, \cdots, x_6, x_7,x_8) &\mapsto (-x_1, \cdots, -x_6, x_8,x_7),
\end{align*}
and its absolute length are equal to 7 and 63, respectively. We can
write the longest element as the composition of the reflections
$s_{r_1}, s_{r_2}, \ldots, s_{r_7}.$ Hence, the upper bound for the
Hofer-Zehnder capacity of a regular coadjoint orbit of $E_7$ is
$$
c_{\operatorname{HZ}}(\mathcal{O}_\lambda, \w_\lambda)\leq
2\lambda_2+2\lambda_4+2\lambda_6+\lambda_8-\lambda_7=2\lambda_2+2\lambda_4+2\lambda_6-2\lambda_7
$$
The highest root is $-e_7+e_8$ and
$$
c_{\operatorname{HZ}}(\mathcal{O}_\lambda, \w_\lambda)\geq
\max\Bigl\{2\lambda_6-2\lambda_7,\,\dfrac{1}{2}(\lambda_1+\cdots+\lambda_6-4\lambda_7)\Bigr\}
$$
Finally, for $E_6$ a system of simple roots is $\{\alpha_1, \cdots,
\alpha_6\}.$ The longest element of $E_6$ has absolute length and
length equal to 4 and 36, respectively, and it can be written as the
composition of the reflections $s_{t_1}, s_{t_2}, s_{t_3}$ and
$s_{t_4}$ where
\begin{align*}
&t_1=-e_2+e_3,
\,t_2=-e_1+e_4,\,t_3=\dfrac{1}{2}(e_1+e_2+e_3+e_4+e_5-e_6-e_7+e_8)\\
&t_4=\dfrac{1}{2}(-e_1-e_2-e_3-e_4+e_5-e_6-e_7+e_8)
\end{align*}
The upper bound for the Hofer-Zehnder capacity is given by
\begin{align*}
c_{\operatorname{HZ}}(\mathcal{O}_\lambda, \w_\lambda)\leq
&-\lambda_1-\lambda_2+\lambda_3+\lambda_4+\lambda_5-\lambda_6-\lambda_7+\lambda_8\\
=&-\lambda_1-\lambda_2+\lambda_3+\lambda_4+\lambda_5-3\lambda_6
\end{align*}
The highest root is $\dfrac{1}{2}(e_1+e_2+e_3+e_4+e_5-e_6-e_7+e_8)$
and the lower bound for the Hofer-Zehnder capacity is
\begin{align*}
c_{\operatorname{HZ}}(\mathcal{O}_\lambda, \w_\lambda)\geq
\lambda_5-\lambda_6-\lambda_7+\lambda_8=\lambda_5-3\lambda_6
\end{align*}

\subsection*{Type $F$}

A system of simple roots for $F_4$ is
$$
S=\{\alpha_1=e_2-e_3, \alpha_2=e_3-e_4, \alpha_3=e_4,
\alpha_4=\dfrac{1}{2}(e_1-e_2-e_3-e_4)\}.
$$
The Dynkin diagram of $F_4$ is shown in Figure \ref{F}.

\begin{figure}[h!]
\centering
\includegraphics[scale=0.7]{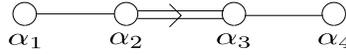}
\caption{Dynkin diagram of $F_4$}\label{F}
\end{figure}

The longest reflection $w_0$ in $F_4$ as a reflection of $\R^4$ is
\begin{align*}
\R^4 &\to \R^4 \\
(x_1, x_2, x_3, x_4) &\mapsto (-x_1, -x_2, -x_3, -x_4)
\end{align*}
We have that $l_T(w_0)=4, l(w_0)=24$ and
$$
w_0=s_{t_1}s_{t_2}s_{t_3}s_{t_4}
$$
where
\begin{align*}
t_1=e_1+e_2,\, t_2=e_1-e_2,\, t_3=e_3+e_4,\, t_4=e_3-e_4
\end{align*}
The upper bound for the Hofer-Zehnder capacity of a regular
coadjoint orbit of typer $F_4$ is
$$
c_{\operatorname{HZ}}(\mathcal{O}_\lambda, \w_\lambda)\leq
2\lambda_1+2\lambda_3
$$
The longest root of $F_4$ is $\rho=e_1+e_2.$ The lower bound for the
Hofer-Zehnder capacity is
$$
c_{\operatorname{HZ}}(\mathcal{O}_\lambda, \w_\lambda)\geq
2\lambda_1
$$

\subsection*{Type $G$}

Finally, a system of simple roots for $G_2$ is
$$
S=\{\alpha_1=e_1-2e_2+e_3, \alpha_2=e_2-e_3\}\subset \R^3
$$
and Dynkin diagram shown in Figure \ref{G}

\begin{figure}[h!]
\centering
\includegraphics[scale=0.7]{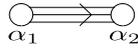}
\caption{Dynkin diagram of $G_2$}\label{G}
\end{figure}

We write
$$
w_0=s_{t_1}s_{t_2}
$$
where
$$
t_1=e_2-e_3,\,\, t_2=2e_1-e_2-e_3
$$
Hence,
$$
c_{\operatorname{HZ}}(\mathcal{O}_\lambda, \w_\lambda)\leq
\dfrac{2}{3}(\lambda_1-\lambda_2-2\lambda_3)=\dfrac{2}{3}(3\lambda_1+\lambda_2)
$$
The highest root is $\rho=2e_1-e_2-e_3,$ and
$$
c_{\operatorname{HZ}}(\mathcal{O}_\lambda, \w_\lambda)\geq
\dfrac{2}{3}(2\lambda_1+\lambda_2)
$$

All the bounds for the Hofer-Zehnder capacity of coadjoint orbits
are summarized in the following table

\begin{center}
\begin{tabular}{c|c|c}
$G$ & Lower bound & Upper bound \\ \hline
& & \\
$U(n)$ & $\frac{1}{2}\sum_{i=1}^n |\lambda_i-\lambda_{n-i+1}|$ &
$\frac{1}{2}\sum_{i=1}^n
|\lambda_i-\lambda_{n-i+1}|$ \\
$Sp(2n)$ & $\lambda_1+\cdots+\lambda_n$ &
$\lambda_1+\cdots+\lambda_n$ \\
$SO(n)$ & & \\
$n=4m$ & $\lambda_1+\cdots+|\lambda_n|,\,\,\, 2\lambda_1$ &
$2\lambda_1+2\lambda_3+\cdots+2\lambda_{n-1}$ \\
$4m+1$ & $\lambda_1+\cdots+\lambda_n,\,\,\, 2\lambda_1$ &
$2\lambda_1+2\lambda_3\cdots+2\lambda_{n-1}$ \\
$4m+2$ & $\lambda_1+\cdots+\lambda_{n-1},\,\,\, 2\lambda_1$ &
$2\lambda_1+2\lambda_3+\cdots+2\lambda_{n-2}$ \\
$4m+3$ & $\lambda_1+\cdots+\lambda_{n},\,\,\, 2\lambda_1$ &
$2\lambda_1+2\lambda_3\cdots+2\lambda_{n}$ \\
$E_6$ & $\lambda_5-\lambda_6-\lambda_7+\lambda_8$ &
$\lambda_3+\lambda_4+\lambda_5-\lambda_1-\lambda_2-3\lambda_6$ \\
$E_7$ &
$\frac{1}{2}(\lambda_1+\cdots-4\lambda_7),\,\,\,2\lambda_6-2\lambda_7
$&
$2\lambda_2+2\lambda_4+2\lambda_6-2\lambda_7$ \\
$E_8$  & $\frac{1}{3}(\lambda_1+\cdots+5\lambda_8),\,\,\,2\lambda_8
$&
$2\lambda_2+2\lambda_4+2\lambda_6+2\lambda_8$ \\
$F_4$ & $2\lambda_1$ & $2\lambda_1+2\lambda_3$ \\
$G_2$ & $\frac{2}{3}(2\lambda_1+\lambda_2)$&
$\frac{2}{3}(3\lambda_1+\lambda_2)$
\end{tabular}
\end{center}

\begin{rk}

Note that regardless of our bounds being sharp or not, we always get
the following inequality
$$
\dfrac{2}{3}\sum_{k=1}^r \langle \lambda, \check{\alpha}_k \rangle
\leq c_{\operatorname{HZ}}(\mathcal{O}_\lambda, \w_\lambda)\leq
\sum_{k=1}^r \langle \lambda, \check{\alpha}_k \rangle
$$
\end{rk}

\section{Acknowledgments}

I would like to thank Yael Karshon and Leonid Polterovich for useful
discussions. This research is supported by the Israel Science
Foundation grants $178/13$ and $1380/13.$

\renewcommand{\refname}{Bibliography}
\bibliographystyle{plain}
\bibliography{biblo}
\nocite{*}

\end{document}